\numberwithin{equation}{section}
\theoremstyle{plain}
\newtheorem{theorem}{Theorem}[section]
\newtheorem{lemma}[theorem]{Lemma}
\theoremstyle{definition}
\newtheorem{definition}[theorem]{Definition}
\theoremstyle{remark}
\newtheorem{remark}[theorem]{Remark}
\newtheorem{case[theorem]}{Case}
\def\R{\mathbb R}
\def\hd{{\dim_{\mathcal H}}}
\def\vt{{\bf\vec{t}}}
\date{September 4, 2014. First author supported by DMS-0853892 and -1362271; third author supported by an NSERC Discovery grant.}    
\address{Department of Mathematics, University of Rochester, Rochester, NY 14627}
\email{allan@math.rochester.edu}  
\address{Department of Mathematics, University of Rochester, Rochester, NY 14627}
\email{iosevich@math.rochester.edu}
\address{Department of Mathematics, University of British Columbia, Vancouver, B.C.
Canada V6T 1Z2}
\email{malabika@math.ubc.ca}
\title{On necklaces inside thin subsets of ${\Bbb R}^d$}
\author{Allan Greenleaf, Alex Iosevich and Malabika Pramanik} 
\begin{document}
\maketitle
\begin{abstract} We study similarity classes of point configurations
  in $\R^d$. Given a finite collection of points, a well-known
  question is:  How high does the Hausdorff dimension $\hd(E)$ of a
  compact set $E \subset {\Bbb R}^d$, $d \ge 2$, need to be to ensure
  that  $E$ contains some similar copy of this configuration? We prove
  results for a related problem, showing that for $\hd(D)$
  sufficiently large, $E$ must contain many point configurations that
  we call $k$-necklaces of constant gap, generalizing equilateral triangles and rhombuses in higher dimensions. Our results extend and complement those in \cite{CLP14,BIT14}, where  related questions were recently studied. 
\end{abstract}  

\maketitle
\tableofcontents

\section{Introduction} 

\vskip.125in 

The study of finite point configurations in sets of various sizes spans analysis, ergodic theory, number theory
and combinatorics. A corollary (due to Steinhaus) of the Lebesgue
density theorem states that any  measurable
set in $\mathbb R^d$ with positive Lebesgue measure contains a
similar copy of any finite configuration of points. There are
many variations on this result. For instance, instead of
sets of positive Lebesgue measure, one can consider an  unbounded set $E \subset {\Bbb R}^d$
of positive upper Lebesgue density, in the sense that 
\[ \limsup_{R \to \infty} \frac{\bigl|E \cap {[-R,R]}^d
  \bigr|}{{(2R)}^d}>0. \]
Here $| \cdot |$ denotes the $d$-dimensional Lebesgue measure. A
result of Bourgain \cite{B86} (also Furstenberg, Katznelson and Weiss \cite{FKW90}) proves that $E$ contains all sufficiently
large copies of a non-degenerate $k$-simplex, i.e., a $(k+1)$-point configuration, for $k \leq d$. Ziegler
\cite{Z06} has generalized this result for $k \geq d$, but the
sufficiently large copies of the configuration are shown to be contained
in an arbitrarily small neighborhood of $E$ rather than in $E$
itself.      
In particular, results of this type show that we can recover every
simplex similarity type inside a subset of ${\Bbb R}^d$ that is
``large'', either in the sense of positive Lebesgue measure or of
positive upper Lebesgue density. It is reasonable to wonder whether
similar conclusions continue to hold even if
such largeness assumptions are weakened. However, the following result
due to Maga \cite{Mag10} shows that the conclusion  in general fails
for Lebesgue null sets in $\mathbb R^d$, {\it even if the set under
consideration is of full Hausdorff dimension}.  Let $\hd(E)$ denote the Hausdorff dimension of a set $E\subset\R^d$.

\begin{theorem} \label{mag10} (Maga \cite{Mag10}) The following
  conclusions hold. 
\begin{enumerate}[(a)]
\item For any $d \ge 2$, there exists a  compact set $A \subset {\Bbb
    R}^d$ with $\hd(A)=d$ such that $A $ does not contain the vertices
  of any parallelogram. 
\item If $d=2$, then given any nondegenerate triple of points
  $x^1,x^2,x^3$ in $ \R^2$, there exists a compact set $A \subset
  {\Bbb R}^2$ with $\hd(A)=2$ such that $A$ does not contain the
  vertices of any triangle similar to $\bigtriangleup x^1x^2x^3$. 
\end{enumerate} 
\end{theorem}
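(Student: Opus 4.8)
The plan is to realize the required sets as limits of Cantor-type constructions, reducing each conclusion to a combinatorial thinning problem on a grid. First, reformulate. A set contains no parallelogram precisely when it has no four distinct points $a,b,c,d$ with $a+c=b+d$; equivalently the sum map $(x,y)\mapsto x+y$ is injective on unordered pairs, i.e. the set is a Sidon set (for the version allowing only non-degenerate parallelograms one may permit collinear exceptions). Identifying $\R^2$ with $\mathbb C$, a triple $\{z_1,z_2,z_3\}$ is similar (resp. directly similar) to $\bigtriangleup x^1x^2x^3$ iff $z_3-z_1=\mu(z_2-z_1)$ or $z_3-z_1=\bar\mu(z_2-z_1)$ (resp. just the first), with $\mu=(x^3-x^1)/(x^2-x^1)\notin\R$ fixed; so part (b) asks for a full-dimensional compact set containing no nontrivial solution of one or two affine equations whose coefficients sum to zero.

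Construct $A=\bigcap_k A_k$, where each $A_k$ is a finite union of cubes of side $\delta_k$, every cube of $A_{k-1}$ being subdivided into $N_k^d$ subcubes of side $\delta_k=\delta_{k-1}/N_k$, of which a prescribed number $m_k$ (with positions chosen separately in each parent) is retained. Letting the ratios $N_k$ grow super-exponentially is what makes full dimension possible despite aggressive thinning: the natural measure on $A$ yields $\hd(A)=\liminf_k\frac{\sum_{i\le k}\log m_i}{\sum_{i\le k}\log N_i}=d+\liminf_k\frac{\sum_{i\le k}\log p_i}{\sum_{i\le k}\log N_i}$, where $p_k=m_k/N_k^d$ is the retained proportion; so $\hd(A)=d$ as soon as $\log(1/p_k)$ is $o(\log N_k)$ on average, which still permits $p_k\to0$, only not too fast. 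Hence at every scale one may afford to discard most subcubes.

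For part (b) the retained sets are chosen to destroy the configuration scale by scale, exploiting that a similar copy of the triangle has all three pairwise distances comparable, hence ``lives at one scale'': if such a copy lies in $A$, it lies inside $O(1)$ parent cubes of $A_{k-1}$ for the appropriate $k$, and, at a suitably chosen scale, its three vertices occupy distinct subcubes whose indices solve $z_3=(1-\mu)z_1+\mu z_2$ or its conjugate up to an error that is negligible on the grid $\{0,\dots,N_k-1\}^d$. It therefore suffices that the retained index set contain no such (approximate) solution; since these are invariant linear equations in three variables, a Behrend-type construction supplies one of density $p_k\ge e^{-c\sqrt{\log N_k}}$, which by the dimension identity is far more than enough to keep $\hd(A)=d$, and self-similarity propagates the exclusion to all finer scales. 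Passing from ``only finitely many copies'' to ``none'' is handled by discarding the finitely many residual near-solutions occurring at bounded scales; porosity of $A$ at every scale, together with choosing the retained sets in a globally coherent way (not merely parent-by-parent), controls copies straddling a bounded number of parent cubes, and copies whose size happens to match the subcube scale are caught one scale down.

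The main obstacle is part (a). The parallelogram relation $a-b+c-d=0$, though invariant, has four variables, and its solution-free subsets of a grid — Sidon sets — have density only of order $N^{-d/2}$, far too small; the scheme above would then give merely $\hd(A)=d/2$. So a genuinely different mechanism is needed. The route I would take is to seek $A$ as a graph $\{(t,g(t)):t\in C\}$ over a compact $C\subset\R^{d-1}$ with $\hd(C)=d-1$: writing the parallelogram relation as $b-a=c-d=(v',v_d)$ forces two parameters $t\ne s$ to satisfy the same equation $g(\,\cdot\,+v')-g(\,\cdot\,)=v_d$, so it suffices to build $g$ with $\hd(\mathrm{graph}\,g)=d$ yet with $t\mapsto g(t+v')-g(t)$ injective for every $v'\ne0$ — which makes $A$ a Sidon set outright. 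The crux, and the place I expect the real work to lie, is to reconcile the oscillation needed to push $\hd(\mathrm{graph}\,g)$ up to $d$ with the rigidity needed for difference-injectivity: one builds $g$ by a further Cantor-type iteration, inserting its dimension-producing oscillations only along a sparse sequence of scales and choosing them, at each such scale, so as to avoid the difference coincidences it could generate there — delicate bookkeeping, but a function-space version of the thinning philosophy used in part (b).
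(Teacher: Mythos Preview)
The paper does not supply a proof of this theorem: it is quoted from \cite{Mag10} as background motivating the results on necklaces, and no argument for it appears anywhere in the text. There is therefore nothing in the paper to compare your proposal against.

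On the merits of the proposal itself: your treatment of part (b) is essentially the right one and close in spirit to Maga's. A similar copy of a fixed nondegenerate triangle has all three side lengths mutually comparable, so it is localized to a single scale of the Cantor construction; Behrend-type sets at each scale then avoid the relevant three-term linear pattern at density $e^{-c\sqrt{\log N_k}}$, which is far more than needed to keep full dimension. The loose ends you flag (copies straddling a bounded number of parents, the finitely many residual near-solutions) are real but routine.

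Part (a) is where your proposal has a genuine gap. You correctly diagnose why the naive scheme fails --- Sidon sets in $\{0,\dots,N-1\}^d$ have density only $N^{-d/2}$ --- but the graph construction you put forward in its place is a wish list, not a construction. The two requirements on $g$ pull directly against each other: to push $\hd(\mathrm{graph}\,g)$ from $d-1$ up to $d$ you need oscillation contributing amplitude comparable to the scale at \emph{every} scale, while your proposed fix (``inserting its dimension-producing oscillations only along a sparse sequence of scales'') is exactly what would prevent the graph from reaching dimension $d$. Nor is it clear that the difference-injectivity can be enforced for the continuum of shifts $v'$ by a scale-by-scale avoidance argument; you assert this can be done by ``delicate bookkeeping'' but give no mechanism. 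Note too that the natural class of functions for which $t\mapsto g(t+v')-g(t)$ is injective for every $v'$ is the strictly convex (or concave) ones, and those are locally Lipschitz, hence have graphs of dimension $d-1$.

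For orientation, Maga's actual route for (a) stays within the cube-refinement framework and avoids the Sidon bottleneck by a different mechanism, the Keleti trick: rather than destroying all additive quadruples at each stage (which forces the $N^{-d/2}$ density), one destroys at stage $k$ only those parallelograms whose \emph{smallest} side is comparable to $\delta_k$. That is a far milder constraint, compatible with retained proportion tending to $1$, and since every parallelogram has a smallest side, it is caught at exactly one stage. This ``one forbidden difference at a time'' idea --- not a Sidon set, and not a graph --- is what rescues full dimension.
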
 

In view of Maga's result, it is reasonable to ask whether interesting specific 
point configurations can be found inside thin sets under additional
structural hypotheses. This question has been recently addressed by Chan,
\L aba and Pramanik  \cite{CLP14}, where the authors establish the
existence of certain finite point configurations in sets of sufficiently high Hausdorff dimension and  carrying a Borel measure
with decaying Fourier transform. (The measure should also satisfy
certain size bounds
for Euclidean balls.) The point configurations obtained in \cite{CLP14}
were required to obey appropriate nondegeneracy constraints when
expressed as a linear system, and included both geometric and
algebraic patterns such as corners in the plane, as well as  polynomial-type
configurations in $\mathbb R^d$.  However, some natural configurations do not satisfy the non-degeneracy
assumption of \cite{CLP14}. For example, {\it corners} in $\R^3$, defined as  collections of $4$ points $x,y,z,w$ in ${\Bbb R}^3$ such that 
\begin{equation} \label{3dcorner} \begin{aligned} (x-y) \perp (x-z),
    \quad &(x-y) \perp (x-z), \quad (x-z) \perp (x-w), \\
    &|x-y|=|x-y|=|x-w| \end{aligned} \end{equation} 
are not covered by the setup of \cite{CLP14}. Neither does a {\it nonplanar} (i.e., not necessarily planar) {\it rhombus} in $\R^3$, defined as a set of $4$ points $x,y,z,w$  such that $|x-y|=|y-z|=|z-w|=|w-x|$. 

It is reasonable to ask which point configurations can be recovered
without extra assumption on the Fourier decay. In view of Maga's
result  (Thm.~\ref{mag10} above), one cannot hope to prove
nontrivial results of this type for configurations that contain a
planar loop. However it still seems plausible that we may be able to
handle tree-like point configurations and loops that are not contained
in a plane and hence enjoy greater directional freedom. This question is partially addressed in \cite{BIT14}. To present this result, we need the following definition. 
\begin{definition} A {\it $k$-chain} in $E \subset {\Bbb R}^d$ {\it with gaps} ${\{t_i\}}_{i=1}^k$ is a sequence 
$$\{x^1,x^2, \dots, x^{k+1}: x^j \in E,\, \ |x^{i+1}-x^i|=t_i>0,\, \ 1 \leq i \leq k\}.$$ 
The $k$-chain has {\it constant gap} $t>0$ if all the $t_i=t$.
Finally, we say  that the chain is {\it non-degenerate} if all the $x^i$s are distinct.  \end{definition} See Fig. 1  for a depiction of  a 3-chain.
\begin{figure}
\label{chainfigure}
\centering
\includegraphics[scale=.5]{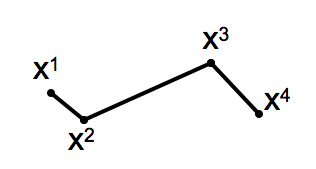}
\caption{A 3-chain}
\end{figure}
\begin{theorem} (Bennett, Iosevich and Taylor
  \cite{BIT14}) \label{mainbit} Suppose that   $E \subset {\Bbb R}^d$
  is a compact set,  $d \ge 2$, and that  $\hd(E)>\frac{d+1}{2}$. Then for any $k \ge 1$, there exists an open interval $I\subset\R$, such that for each $t \in I$ there exists a non-degenerate $k$-chain in $E$ with constant gap $t$. 
\end{theorem}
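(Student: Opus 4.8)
The plan is to fix a Frostman probability measure $\mu$ on $E$ with $\mu(B(x,r))\le r^{s'}$, where $\tfrac{d+1}{2}<s<s'<\hd(E)$ are auxiliary exponents to be chosen, and to study, for $t>0$ and a mollification parameter $\epsilon>0$, the approximate $k$-chain counting functional
\[
\Lambda^\epsilon_t(k)\;:=\;\int\!\cdots\!\int\ \prod_{i=1}^{k}\sigma^\epsilon_t(x^{i+1}-x^i)\ \prod_{j=1}^{k+1}d\mu(x^j)\;=\;\bigl\langle A_\epsilon^{k}\mathbf 1,\mathbf 1\bigr\rangle_{L^2(\mu)},
\]
where $\sigma^\epsilon_t=\sigma_t*\psi_\epsilon$ is a nonnegative smooth approximation to the surface measure $\sigma_t$ on the sphere of radius $t$ (so $\sigma^\epsilon_t$ is supported in $\{\,t-\epsilon\le|z|\le t+\epsilon\,\}$), and $A_\epsilon h:=\sigma^\epsilon_t*(h\,d\mu)$ is the associated self-adjoint operator on $L^2(\mu)$. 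Everything reduces to showing that, for $t$ ranging over a suitable fixed open interval $I$, $\Lambda^\epsilon_t(k)$ is bounded above and below by positive constants independent of $\epsilon$. Indeed, once this is known, the support condition on $\sigma^\epsilon_t$ forces the existence of configurations in $(\operatorname{supp}\mu)^{k+1}$ with all consecutive gaps in $[t-\epsilon,t+\epsilon]$, and letting $\epsilon\to0$ and using compactness of $(\operatorname{supp}\mu)^{k+1}$ produces a genuine $k$-chain in $E$ with constant gap exactly $t$.

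The analytic heart of the matter is the uniform operator bound
\[
\|A_\epsilon\|_{L^2(\mu)\to L^2(\mu)}\le C,\qquad\text{uniformly for }\epsilon\in(0,1)\text{ and }t\text{ in compact subsets of }(0,\infty),
\]
and this is exactly where $\hd(E)>\tfrac{d+1}{2}$ is used. I would prove it in three steps: first, $|\widehat{\sigma^\epsilon_t}(\xi)|\le|\widehat{\sigma_t}(\xi)|\lesssim(1+|\xi|)^{-(d-1)/2}$ with implied constants independent of $\epsilon$; second, the Frostman bound gives $\sup_x\int|x-y|^{-s}\,d\mu(y)<\infty$, so $h\,d\mu$ has $s$-energy $\lesssim\|h\|_{L^2(\mu)}^2$, i.e. $h\,d\mu\in\dot H^{(s-d)/2}$, whence convolution with $\sigma^\epsilon_t$ (a smoothing of order $\tfrac{d-1}{2}$) puts $\sigma^\epsilon_t*(h\,d\mu)$ in $H^{(s-1)/2}$ with norm $\lesssim\|h\|_{L^2(\mu)}$; third, the restriction/trace inequality $\int|F|^2\,d\mu\lesssim\|F\|_{H^\alpha}^2$, valid for $\alpha>\tfrac{d-s'}{2}$ by a standard Littlewood--Paley argument using $\mu(B(x,r))\lesssim r^{s'}$, closes the estimate because $\tfrac{s-1}{2}>\tfrac{d-s'}{2}$ is equivalent to $s+s'>d+1$, which holds by our choice of $s,s'$. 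Iterating then gives the uniform upper bound $\Lambda^\epsilon_t(k)=\langle A_\epsilon^{k}\mathbf 1,\mathbf 1\rangle\le\|A_\epsilon\|^{k}\le C^{k}$. A variant of the same three steps, using $|\widehat{\sigma^\epsilon_t}(\xi)-\widehat{\sigma^\epsilon_{t'}}(\xi)|\lesssim(1+|\xi|)^{-(d-1)/2}(|t-t'|\,|\xi|)^{\theta}$ together with the slack in $s+s'>d+1$, yields a Hölder modulus of continuity for $t\mapsto\Lambda^\epsilon_t(k)$ that is uniform in $\epsilon$.

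For the lower bound I would first settle $k=1$. Taking $\sigma_t$ to be the (unnormalized) surface measure, one has $\int_a^b\Lambda^\epsilon_t(1)\,dt\to(\mu\times\mu)\bigl(\{\,a\le|x-y|\le b\,\}\bigr)$ as $\epsilon\to0$, and this limit is positive once $[a,b]$ is a small interval around $|p-q|$ for two distinct points $p,q\in\operatorname{supp}\mu$. Combining this with the uniform upper bound and the uniform equicontinuity, Arzel\`a--Ascoli shows that $\Lambda^\epsilon_t(1)$ converges uniformly on $[a,b]$ to a continuous function with positive integral there; hence that function is $\ge c_1>0$ on some open subinterval $I$, and therefore $\Lambda^\epsilon_t(1)\ge c_1/2$ on $I$ for all sufficiently small $\epsilon$. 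For even $k=2m$, the spectral theorem for $A_\epsilon$ and Jensen's inequality (the spectral measure of $A_\epsilon$ at $\mathbf 1$ is a probability measure since $\mu$ is) give $\langle A_\epsilon^{2m}\mathbf 1,\mathbf 1\rangle\ge\langle A_\epsilon^{2}\mathbf 1,\mathbf 1\rangle^{m}=\|A_\epsilon\mathbf 1\|_{L^2(\mu)}^{2m}$, and since $\|A_\epsilon\mathbf 1\|_{L^2(\mu)}\ge\langle A_\epsilon\mathbf 1,\mathbf 1\rangle=\Lambda^\epsilon_t(1)$ by Cauchy--Schwarz and $\|\mathbf 1\|_{L^2(\mu)}=1$, we obtain $\Lambda^\epsilon_t(2m)\ge\Lambda^\epsilon_t(1)^{2m}\ge(c_1/2)^{2m}$ for $t\in I$ and small $\epsilon$. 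Odd $k$ then comes for free, since any initial segment of a nondegenerate $(k+1)$-chain is a nondegenerate $k$-chain. Finally, degenerate configurations are discarded by a routine argument: the part of $\Lambda^\epsilon_t(2m)$ carried by configurations in which two of the points lie within $\delta$ of each other tends to $0$ as $\epsilon\to0$ and then $\delta\to0$, so the $\delta$-separated part stays bounded below; passing to the limit as above yields, for every $t\in I$, an honest $\delta$-separated---hence nondegenerate---$k$-chain in $E$ with constant gap $t$.

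The step I expect to be the main obstacle is the uniform operator bound $\|A_\epsilon\|_{L^2(\mu)\to L^2(\mu)}\le C$: the rest of the argument is soft, but this estimate is precisely where the numerology forces $\hd(E)>\tfrac{d+1}{2}$ (equivalently $s+s'>d+1$, the exact condition making the orders $\tfrac{d-1}{2}$, $\tfrac{s-d}{2}$, $-\tfrac{d-s'}{2}$ from the three steps above fit together). A secondary subtlety worth noting is the jump from the $k=1$ distance-set positivity to constant-gap $k$-chains: one cannot simply integrate $\Lambda^\epsilon_t(k)$ against $dt$, since ``constant gap'' is not an open constraint, which is why the lower bound is routed through the spectral/Jensen inequality for $A_\epsilon$ rather than through a crude measure estimate.
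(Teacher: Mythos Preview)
Your proposal is correct and matches the paper's framework at the level of the upper bound: your three-step argument for $\|A_\epsilon\|_{L^2(\mu)\to L^2(\mu)}\le C$ is exactly Theorem~\ref{kick} (applied with $K=\sigma_t$, $\phi=\psi=\mu$), and the iteration $\Lambda^\epsilon_t(k)\le C^k$ is the content of Lemma~\ref{chainenergylemma}.

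The lower bound is where you genuinely diverge from the paper's route. The paper (following \cite{BIT14}, via Lemma~\ref{nontrivial}) proceeds by \emph{induction on $k$}: it writes $\int d\mu^\epsilon_k = M_k(t)+R^\epsilon(t)$, shows $M_k$ is continuous in $t$ using the energy bound from Lemma~\ref{chainenergylemma}, shows $R^\epsilon\to0$, and then shows $\int M_k(t)\,dt>0$ by the inductive hypothesis on $\mu_{k-1}$, yielding an interval (which may depend on $k$) on which $M_k>0$. Your argument is cleaner: exploiting that $A_\epsilon$ is bounded and self-adjoint on $L^2(\mu)$, you use the spectral theorem and Jensen to get $\langle A_\epsilon^{2m}\mathbf 1,\mathbf 1\rangle\ge\langle A_\epsilon\mathbf 1,\mathbf 1\rangle^{2m}$, which reduces the lower bound for \emph{every even} $k$ to the single case $k=1$; odd $k$ then follows by truncation. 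This buys you two things the inductive proof does not: the interval $I$ is the same for all $k$, and you never need to pass through the intermediate limit measures $\mu_{k-1}$ (and the slightly delicate step in the paper where $d\mu^\epsilon_k$ is redefined using the limiting $\mu_{k-1}$ rather than $\mu^\epsilon_{k-1}$). Conversely, the paper's inductive argument is more flexible for non-constant gap vectors, where your self-adjointness/spectral trick is unavailable.

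One caution on your non-degeneracy sketch: the order of limits you wrote (``$\epsilon\to0$ and then $\delta\to0$'') is not the one that comes out of the crude estimate $\sigma^\epsilon_t\lesssim\epsilon^{-1}$. What actually works is the paper's coupling $\delta=N\epsilon$ with $N=N(\epsilon)\to\infty$ slowly; cf.\ the argument following \eqref{goodshit}, which for chains (as opposed to necklaces) loses only one factor of $\epsilon^{-1}$ when an endpoint is involved and hence covers $d=2$ as well. This is the only place your write-up would need tightening.
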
 

The idea behind the proof of Thm.~\ref{mainbit} is to construct a
 measure on all  $k$-chains, naturally induced from a Frostman measure $\mu$  on $E$, and consider its Radon-Nikodym derivative. We prove that it is bounded from above in all cases, and from below
in the case when all the gaps are in a suitable interval. The lower
bound is accomplished using the continuity of the distance measure in
appropriate dimensional regimes. An upper bound is proved using a
fractal variant of the classical Parseval identity recently
established by Iosevich, Sawyer, Taylor and Uriarte-Tuero
\cite{ISTU14}, based on an earlier result of Strichartz
\cite{Str90}. In practice, this amounts to obtaining upper and lower bounds on the quantity 
\begin{equation} \label{chainradonnikodym} C_k^{\epsilon}(\mu)=\int
  \dots \int \prod_{j=1}^k \sigma^{\epsilon}_t(x^{j+1}-x^j)
  d\mu(x^j) \end{equation} that are uniform in $\epsilon$. Here and throughout the paper, $\sigma_t$ is the Lebesgue measure on the sphere of radius $t$, $\sigma_t^{\epsilon}=\sigma_t*\rho_{\epsilon}$, with $\rho \ge 0$ a smooth cut-off function, $\int \rho(x)=1$ and $\rho_{\epsilon}(x)=\epsilon^{-d} \rho \left(\frac{x}{\epsilon}\right)$. 
An analogous multilinear form, expressed in terms of the Fourier
transforms of measures rather than the measures themselves, was used
in \cite{CLP14} as well. There, a finite upper bound on the form
justified its existence and definition; a nontrivial lower  bound then
established the existence of the linear configurations.  

While the results in \cite{{BIT14},{CLP14}} are focused on point configurations that do not contain loops, we shall see that both the lower bound and the upper bound idea in \cite{BIT14}, combined with the generalized three-lines lemma approach in \cite{ISTU14}, allow us to capture configurations that were  inaccessible by these previous  methods. In particular, we will be able to handle nonplanar rhombuses in dimensions three and higher, as well as more complicated closed loops. We now turn our attention to the precise formulation of our results.

\section{Statement of Results} 

\begin{definition} A {\it $k$-necklace} in $E \subset {\Bbb R}^d$, $d \ge 2$,  with gaps $\vt=(t_1, t_2, \dots, t_{k})$, $t_j>0$, is a finite sequence $x^1, x^2, \dots, x^{k}$, $x^j \in E$, such that $|x^j-x^{j+1}|=t_j$, $1 \leq j \leq k-1$ and $|x^{k}-x^1|=t_{k}$. We say that this necklace is {\it non-degenerate} if $x^i \not=x^j$ for any $1 \leq j \leq k$, and {\it has constant gap $t$} if $t_1=\dots t_k=t$. \end{definition}


      \begin{figure}
\label{necklace}
\centering
\includegraphics[scale=.5]{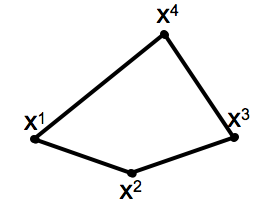}
\caption{A 4-necklace}
\end{figure}

\begin{remark} Thus, a $k$-necklace is a closed $(k+1)$-chain (see
  Fig. 2.), and being of constant gap is the same as all edges being
  of equal length. 
\end{remark}

\begin{remark} A nondegenerate 4-necklace of constant gap $t>0$  in
  $\R^d$  is  a nonplanar rhombus of side length
  $t$. 
  Note  that  two non-degenerate
  $4$-necklaces, even with similar gap vectors,  need not be similar to each other, due to the freedom that comes from not necessarily being planar. 
  \end{remark}

  \begin{remark}
  In general a $k$-necklace with a given gap vector $\vt$  is a member of the union of the similarity classes of a family of $k$-simplices, rather
  than being a similar copy of a specific $k$-simplex.  
  \end{remark}

  We now can state our main result.

\vskip.125in 

\begin{theorem} \label{necklace} Let $E$ be a compact subset of ${\Bbb R}^d$, $d \ge 3$. 

\vskip.125in 

i) Suppose that $d \ge 4$, $k$ is even and $dim_{{\mathcal H}}(E)>\frac{d+3}{2}$, without any additional assumptions on measures carried by $E$. Then there exists a non-empty open  interval $I$ such that for every $t \in I$, $E$ contains  some $k$-necklace with constant gap $ t$. 

\vskip.125in 

ii) Suppose that $d \ge 3$. Suppose that for some $\delta>0$,   $\hd(E)>d-\delta$ and there exists a Borel measure $\mu$ supported on $E$ such that

\begin{equation} \label{decay} |\widehat{\mu}(\xi)| \leq C{|\xi|}^{-1-\frac{\delta}{2}},\quad\forall\xi\in\R^d. \end{equation} 
Then  there exists a non-empty open interval $I$ such that for every $t \in I$, 
$E$ contains  a nonplanar rhombus of side length $t$. 


\end{theorem}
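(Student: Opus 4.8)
The plan is to adapt the moment method behind Theorem~\ref{mainbit} to the closed configuration, using that a nonplanar rhombus of side $t$ is exactly a non-degenerate $4$-necklace of constant gap $t$, and that the $4$-cycle is \emph{bipartite}. Since $\hd(E)>d-\delta$, Frostman's lemma gives a probability measure $\mu_1$ on $E$ with $\mu_1(B(x,r))\le Cr^{\alpha}$ for some $\alpha\in(d-\delta,\hd(E))$; let $\mu_2$ be the measure on $E$ furnished by \eqref{decay}. (One checks that in the range of parameters for which Theorem~\ref{necklace}(ii) is not already contained in part (i), both $\alpha$ and the exponent forced on $\mu_2$ by \eqref{decay} exceed $\tfrac{d+1}{2}$.) Colour the four vertices of the cycle alternately by $\mu_1$ and $\mu_2$ and set
\begin{equation*}
\Lambda^{\epsilon}(t)=\int_{(\R^d)^4}\sigma_t^{\epsilon}(x^1-x^2)\,\sigma_t^{\epsilon}(x^2-x^3)\,\sigma_t^{\epsilon}(x^3-x^4)\,\sigma_t^{\epsilon}(x^4-x^1)\;d\mu_1(x^1)\,d\mu_2(x^2)\,d\mu_1(x^3)\,d\mu_2(x^4).
\end{equation*}
Because the cycle is even, $\Lambda^{\epsilon}$ factors as an $L^2$-square: with $K^{\epsilon}(u,v):=\int\sigma_t^{\epsilon}(u-z)\sigma_t^{\epsilon}(z-v)\,d\mu_2(z)\ge0$, the smoothed density of $2$-chains $u\to v$ with gaps $(t,t)$ weighted by $\mu_2$ at the midpoint, one has $\Lambda^{\epsilon}(t)=\iint|K^{\epsilon}(u,v)|^{2}\,d\mu_1(u)\,d\mu_1(v)\ge0$. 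As in \cite{BIT14}, it suffices to exhibit a non-empty open interval $I$ with $0<\liminf_{\epsilon\to0}\Lambda^{\epsilon}(t)\le\limsup_{\epsilon\to0}\Lambda^{\epsilon}(t)<\infty$ for $t\in I$: the upper bound forces a subsequential weak-$*$ limit $\Omega_t$ of $\prod_{i=1}^{4}\sigma_t^{\epsilon}(x^i-x^{i+1})\,d(\mu_1\otimes\mu_2\otimes\mu_1\otimes\mu_2)$ (with $x^5:=x^1$) to be a nonzero finite measure carried by exact $4$-necklaces of gap $t$ inside $E^4$; quantitatively it shows $\Omega_t$ has an $L^2$ density against the natural surface measure on the $4$-necklace variety, so it assigns no mass to the sub-loci where two $x^i$ coincide or the four $x^i$ are coplanar --- each of positive codimension in that variety because $d\ge3$. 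Hence $\Omega_t$ charges genuine nonplanar rhombi of side $t$, proving Theorem~\ref{necklace}(ii).

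For the lower bound, $K^{\epsilon}\ge0$ and $\mu_1$ is a probability measure, so by Cauchy--Schwarz
\begin{equation*}
\Lambda^{\epsilon}(t)\ \ge\ \Big(\iint K^{\epsilon}(u,v)\,d\mu_1(u)\,d\mu_1(v)\Big)^{2}\ =\ \Big(\int\big(\sigma_t^{\epsilon}*\mu_1\big)^{2}\,d\mu_2\Big)^{2},
\end{equation*}
the square of the mixed $2$-chain form of \eqref{chainradonnikodym}. Since both $\mu_1$ and $\mu_2$ have dimension above $\tfrac{d+1}{2}$, the continuity-of-distance-measure argument from the proof of Theorem~\ref{mainbit} applies verbatim and shows $\liminf_{\epsilon\to0}\int(\sigma_t^{\epsilon}*\mu_1)^2\,d\mu_2>0$ for every $t$ in some open interval; this is our $I$.

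The substantial step is the uniform upper bound, and this is where \eqref{decay} and the generalized three-lines lemma of \cite{ISTU14} enter. Expanding each $\sigma_t^{\epsilon}$ by Fourier inversion and integrating out the point variables writes
\begin{equation*}
\Lambda^{\epsilon}(t)=\int_{\R^{3d}}\widehat{\mu_2}(a)\,\overline{\widehat{\mu_2}(b)}\;\overline{\widehat{\mu_1}(c)}\,\overline{\widehat{\mu_1}(a-b-c)}\;\Phi^{\epsilon}(a,b,c)\,da\,db\,dc,
\end{equation*}
where $\Phi^{\epsilon}(a,b,c)=\int_{\R^{d}}\widehat{\sigma_t^{\epsilon}}(\xi)\,\widehat{\sigma_t^{\epsilon}}(a-\xi)\,\widehat{\sigma_t^{\epsilon}}(\xi-c)\,\widehat{\sigma_t^{\epsilon}}(b+c-\xi)\,d\xi$; thus the four $\widehat{\mu_j}$-arguments are subject to a single linear relation (they sum to $0$) --- the frequency integral closes into a \emph{loop} rather than telescoping, which is exactly why one cannot simply iterate a bilinear estimate four times as in the chain case of \cite{BIT14}. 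The inputs available are $|\widehat{\mu_2}(\zeta)|\lesssim(1+|\zeta|)^{-1-\delta/2}$ from \eqref{decay}, $|\widehat{\sigma_t^{\epsilon}}(\zeta)|\lesssim(1+|\zeta|)^{-(d-1)/2}$ with rapid decay for $|\zeta|\gtrsim\epsilon^{-1}$, and the finite $s$-energy of $\mu_1$ for all $s<\alpha$. A crude estimate of the display by absolute values diverges; instead, following the upper-bound strategy of \cite{BIT14} (which rests on the fractal Parseval identity of \cite{ISTU14}), one embeds $\Lambda^{\epsilon}(t)$ in an analytic family of multilinear forms and runs the Hadamard three-lines argument, interpolating between a ``Hausdorff endpoint'' that uses only the $s$-energy of $\mu_1$ (and costs a controlled negative power of $\epsilon$) and a ``Fourier endpoint'' that uses only \eqref{decay} (and is $\epsilon$-free on the relevant frequency range). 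Choosing the interpolation parameter in terms of $d$ and $\delta$, the hypotheses $\hd(E)>d-\delta$ and the decay rate $1+\tfrac{\delta}{2}$ are calibrated so that for $d\ge3$ the net power of $\epsilon$ is $\ge0$, giving $\sup_{0<\epsilon<1}\Lambda^{\epsilon}(t)<\infty$.

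I expect the design of this analytic family and the precise tracking of the $\epsilon$-powers around the loop --- in particular verifying that the near-diagonal pieces $\{|u-v|\lesssim\epsilon^{\gamma}\}$ of $\iint|K^{\epsilon}|^2\,d\mu_1\otimes d\mu_1$, which correspond to collapsed necklaces $\{p,q,p,q\}$ and whose negligibility also yields the non-degeneracy of $\Omega_t$ --- to be the main obstacle; the remaining steps are a routine adaptation of \cite{BIT14}. Note that for $d=3$ the purely dimensional argument of part (i) is unavailable (it would demand $\hd(E)>\tfrac{d+3}{2}=3$), which is precisely why part (ii) supplies the Fourier hypothesis \eqref{decay}.
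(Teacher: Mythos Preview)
Your high-level scaffold for part (ii) --- write the necklace count as a square, get the lower bound by Cauchy--Schwarz reducing to a $2$-chain, and seek a uniform upper bound by analytic interpolation --- is exactly the paper's architecture. But two substantive pieces are missing. First, you do not treat part (i) at all; the paper's argument there is not the same as (ii): it embeds $\sigma_t^{\epsilon}$ in the analytic family $\sigma^{\alpha}(x)=\Gamma(\alpha)^{-1}(1-|x|^2)_+^{\alpha-1}$, places the exponents $\pm\alpha$ on alternating edges of the folded necklace, and bounds both endpoints $Re(\alpha)=\pm1$ \emph{uniformly in $\epsilon$} using Theorem~\ref{kick} with $|\widehat{\sigma^{\alpha}}(\xi)|\lesssim|\xi|^{-(d-1)/2-Re(\alpha)}$; Hirschman's three-lines lemma then gives the bound at $\alpha=0$. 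There is no ``controlled negative power of $\epsilon$'' at either endpoint, and your proposed interpolation between an $\epsilon^{-a}$ endpoint and an $\epsilon^{0}$ endpoint cannot yield a uniform bound (three-lines gives $\epsilon^{-\theta a}$, still divergent). Second, for part (ii) the paper's key move --- which your plan does not contain --- is to run Theorem~\ref{kick} with the roles swapped: at the bad endpoint one takes the kernel $K=\mu$ (so the required decay $|\widehat K|\lesssim|\xi|^{-\gamma}$ is supplied by \eqref{decay}), the ``measure'' $\phi=\lambda^{\epsilon,-1+iu}$ (which obeys $\phi(B(x,r))\lesssim r^{d-2}$), and $\psi=\mu$; the numerology then gives exactly $\gamma>1+\delta/2$ when $s=d-\delta$. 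Your Fourier-side expansion and vague ``Hausdorff vs.\ Fourier endpoint'' scheme does not locate this mechanism.

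A few smaller points. The bipartite two-measure colouring is an interesting idea but unnecessary and it creates new difficulties: the Fourier-decaying $\mu_2$ need not satisfy any Frostman ball condition, so the lower-bound/continuity argument you invoke (Lemma~\ref{nontrivial} style) does not apply to it as written. The paper simply uses one measure throughout. Your non-degeneracy argument via an ``$L^2$ density on the necklace variety'' is not justified by the upper bound alone; the paper instead shows directly that the portion of $\mathcal N_k^{\epsilon}(\mu)$ coming from $|x^i-x^j|\le N\epsilon$ is $O(\epsilon^{s-2})$ and hence negligible. Finally, ``nonplanar rhombus'' in this paper means \emph{not necessarily planar}; you do not need to exclude the coplanar locus.
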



\begin{remark} It would be interesting to extend Thm. \ref{necklace} to cover the case when $k$ is odd. Note however that, at least in the case $k=3$, the conclusion of part (i) of  the theorem is certainly false in view of Maga's counter-example \cite{Mag10}. 
\end{remark} 


\begin{remark} If the $\hd(E)=s$, then in (\ref{decay}), $1+\frac{\delta}{2} \leq \frac{s}{2}$. In particular, if $E$ is a Salem set \cite{M95} of dimension $s>\frac{d+2}{2}$, then $E$ contains the vertices of a rhombus. \end{remark} 

\begin{remark} While we state Thm. \ref{necklace} for necklaces with constant gaps, a careful examination of the proof shows that we can say a bit more:
\end{remark}

\begin{definition} We say that a non-degenerate $(n-1)$-chain with vertices $x^1, x^2, \dots, x^n$ {\it generates }a non-degenerate $(2n-2)$-necklace with  vertices $x^1, x^2, \dots, x^{2n-2}$ if \\ $|x^j-x^{j+1}|=|x^{k+2-j}-x^{k+1-j}|$ for $2 \leq j \leq n-1$. (See Fig. 3.)  \end{definition} 

\noindent The proof of Thm. \ref{necklace} (i) shows that in fact  the conclusion holds for  any necklace with an even number of vertices which is generated by a non-degenerate chain.


\begin{figure}
\label{generate}
\centering
\includegraphics[scale=.5]{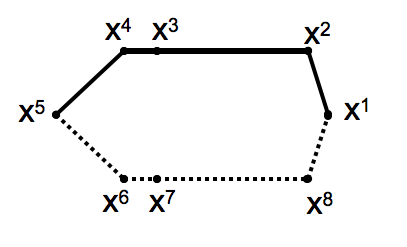}
\caption{A $4$-chain $x^1,\dots,x^5$ generates an $8$-necklace $x^1,\dots, x^8$.}
\end{figure}

\vskip.125in

\section{Proof of Theorem \ref{necklace}} 


\subsection{Preliminary calculations} 


We shall need the following result from \cite{ISTU14}, which we state in the form needed in this paper. 

\begin{theorem} (\cite[~Thm 1.1]{ISTU14}) \label{kick} Let $K\in\mathcal S'(\R^d)$ be a tempered distribution satisfying 
$$|\widehat{K}(\xi)| \leq C{|\xi|}^{-\gamma}, \ \gamma \in \left(0, \frac{d}{2} \right).$$ 
For $\epsilon>0$, let $K^{\epsilon}=K*\rho_{\epsilon}$.  
Suppose that $\phi, \psi$ are compactly supported Borel measures on ${\Bbb R}^d$ satisfying $\phi(B(x,r)) \leq Cr^{s_{\phi}}, \psi(B(x,r)) \leq Cr^{s_{\psi}}$, respectively, with $s_{\phi}, s_{\psi}>0$. Let $T_{K^{\epsilon}}f=K^{\epsilon}*(f\phi)$. Suppose that $\gamma>d-s$, where $s=\frac{s_{\phi}+s_{\psi}}{2}$. Then 
$$ {||T_{K^{\epsilon}}f||}_{L^2(\psi)} \leq C{||f||}_{L^2(\phi)}$$ where $C$ does not depend on $\epsilon$.
\end{theorem}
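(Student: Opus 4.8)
The plan is to dualize, pass to the Fourier side, split the decay of the multiplier between the two measures, and apply a Strichartz-type fractal Plancherel inequality once to $\phi$ and once to $\psi$.

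Fix $\epsilon>0$. The only place where uniformity in $\epsilon$ enters is the trivial bound $|\widehat{\rho_\epsilon}(\xi)|=|\widehat\rho(\epsilon\xi)|\le\int\rho=1$ (using $\rho\ge0$), which gives
\[
|\widehat{K^\epsilon}(\xi)|=|\widehat K(\xi)|\,|\widehat\rho(\epsilon\xi)|\le C|\xi|^{-\gamma},\qquad \xi\in\R^d,
\]
with $C$ the constant from the hypothesis and \emph{independent of $\epsilon$}. For fixed $\epsilon$, the factor $\widehat\rho(\epsilon\cdot)$ is Schwartz, so $\widehat{K^\epsilon}\in L^1(\R^d)$, $K^\epsilon$ is bounded and continuous, and (as $\phi,\psi$ are compactly supported finite measures) $K^\epsilon*(f\phi)$ is bounded and continuous for bounded $f$; thus all integrals below converge absolutely and the Parseval manipulations are legitimate, the general $f\in L^2(\phi)$ being reached afterwards by density. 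By $L^2(\psi)$-duality,
\[
\|T_{K^\epsilon}f\|_{L^2(\psi)}=\sup_{\|h\|_{L^2(\psi)}\le1}\Bigl|\int\bigl(K^\epsilon*(f\phi)\bigr)\,\overline h\,d\psi\Bigr|,
\]
and Parseval rewrites the inner integral as $\int\widehat{K^\epsilon}(\xi)\,\widehat{f\phi}(\xi)\,\overline{\widehat{h\psi}(\xi)}\,d\xi$. Hence it suffices to prove
\[
\int_{\R^d}|\xi|^{-\gamma}\,|\widehat{f\phi}(\xi)|\,|\widehat{h\psi}(\xi)|\,d\xi\ \le\ C\,\|f\|_{L^2(\phi)}\,\|h\|_{L^2(\psi)}
\]
with $C$ not depending on $\epsilon$.

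Next I would split $\gamma=\gamma_\phi+\gamma_\psi$ with $\gamma_\phi,\gamma_\psi>0$ to be chosen, factor $|\xi|^{-\gamma}=|\xi|^{-\gamma_\phi}|\xi|^{-\gamma_\psi}$, and apply Cauchy--Schwarz, bounding the left side by
\[
\Bigl(\int|\widehat{f\phi}(\xi)|^2|\xi|^{-2\gamma_\phi}\,d\xi\Bigr)^{1/2}\Bigl(\int|\widehat{h\psi}(\xi)|^2|\xi|^{-2\gamma_\psi}\,d\xi\Bigr)^{1/2}.
\]
Each factor is then handled by the Strichartz-type fractal Plancherel estimate: if $\mu$ is a compactly supported Borel measure with $\mu(B(x,r))\le Cr^a$ and $0<\alpha<\min(a,d)$, then $\int|\widehat{g\mu}(\xi)|^2|\xi|^{\alpha-d}\,d\xi\le C\|g\|_{L^2(\mu)}^2$. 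This I would prove as in \cite{Str90,ISTU14}: since $\mathcal F[|x|^{-\alpha}](\xi)=c_{d,\alpha}|\xi|^{\alpha-d}$ with $c_{d,\alpha}>0$ for $0<\alpha<d$, the left side equals a positive multiple of $\int\!\!\int|x-y|^{-\alpha}g(x)\overline{g(y)}\,d\mu(x)\,d\mu(y)$; integrating the layer-cake representation of $\int|x-y|^{-\alpha}d\mu(y)$ against the Frostman bound shows (thanks to $\alpha<a$) that $\sup_x\int|x-y|^{-\alpha}d\mu(y)<\infty$, so Schur's test bounds the double integral by $C\|g\|_{L^2(\mu)}^2$. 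Applying this with $(\mu,a,\alpha)=(\phi,s_\phi,d-2\gamma_\phi)$ and $(\psi,s_\psi,d-2\gamma_\psi)$ shows the two factors are $\le C\|f\|_{L^2(\phi)}$ and $\le C\|h\|_{L^2(\psi)}$ provided
\[
\tfrac{d-s_\phi}{2}<\gamma_\phi<\tfrac d2,\qquad\tfrac{d-s_\psi}{2}<\gamma_\psi<\tfrac d2 .
\]

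It remains to see that the hypotheses force such a splitting to exist. Writing $\gamma_\psi=\gamma-\gamma_\phi$, the constraints say $\gamma_\phi$ must lie in the intersection of $(\tfrac{d-s_\phi}{2},\tfrac d2)$ and $(\gamma-\tfrac d2,\gamma-\tfrac{d-s_\psi}{2})$, and this intersection is nonempty precisely because $\gamma>d-s$, $\gamma<d$, $s_\phi>0$ and $s_\psi>0$---all of which hold since $s=\tfrac{s_\phi+s_\psi}{2}$, $\gamma>d-s$, and $\gamma<\tfrac d2<d$. No constant in the argument depends on $\epsilon$, so this finishes the proof. The heart of the matter is the fractal Plancherel estimate; beyond it, the only care needed is the a priori justification of the Parseval identity (supplied, for each fixed $\epsilon$, by the Schwartz decay of $\widehat\rho(\epsilon\cdot)$ plus density) and the elementary bookkeeping above, in which $\gamma<d/2$ keeps the Riesz exponents positive so that $|\xi|^{\alpha-d}$ stays locally integrable, while $\gamma>d-s$ is exactly what lets the decay $|\xi|^{-\gamma}$ of the multiplier be shared between $\phi$ and $\psi$. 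An essentially equivalent route, closer to \cite{ISTU14}, embeds $T_{K^\epsilon}$ in the analytic family of operators with multiplier $|\xi|^{-z}\widehat\rho(\epsilon\xi)$ and interpolates by the three-lines lemma between the endpoint estimates governed separately by $s_\phi$ and by $s_\psi$.
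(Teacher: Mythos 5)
Your proof is correct and follows essentially the same path as the paper's: dualize, pass to the Fourier side, split $|\xi|^{-\gamma}$ between the two factors via Cauchy--Schwarz, and invoke the Schur-test fractal Plancherel estimate (the paper's Lemma \ref{fenergy}) once for each measure, with the splitting made possible precisely by $\gamma>d-s$. The only differences are cosmetic (your $\gamma_\phi,\gamma_\psi$ are half the paper's, and you add the a priori justification of Parseval for fixed $\epsilon$, which the paper omits).
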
  
\begin{proof} 
Since the proof of Theorem \ref{kick} is very simple, we include it for the sake of completeness. It is enough to show that 
$$ \langle T_{K^{\epsilon}}f, g\psi \rangle  \leq C{||f||}_{L^2(\phi)} \cdot {||g||}_{L^2(\psi)},\quad\forall f, g.$$ 
The left hand side equals 
$$ \int \widehat{K^{\epsilon}}(\xi) \widehat{f\phi}(\xi) \widehat{g\psi}(\xi) d\xi.$$ By the assumptions of Theorem \ref{kick}, the modulus of this quantity is bounded by 
$$ C \int {|\xi|}^{-\gamma} |\widehat{f\phi}(\xi)|
|\widehat{g\psi}(\xi)| d\xi,$$ and applying the Cauchy-Schwarz
inequality results in the following upper bound for this quantity: 
\begin{equation} \label{2square} C {\left( \int {|\widehat{f\phi}(\xi)|}^2 {|\xi|}^{-\gamma_{\phi}} d\xi \right)}^{\frac{1}{2}} \cdot 
{\left( \int {|\widehat{g \psi}(\xi)|}^2 {|\xi|}^{-\gamma_{\psi}} d\xi \right)}^{\frac{1}{2}} \end{equation} for any $\gamma_{\phi}, \gamma_{\psi}>0$ such that $\gamma=\frac{\gamma_{\phi}+\gamma_{\psi}}{2}$. 
By Lemma \ref{fenergy} soon to be proved below, the quantity (\ref{2square}) is bounded by $C {||f||}_{L^2(\phi)} \cdot {||g||}_{L^2(\psi)}$ after choosing, as we may, $\gamma_{\phi}>d-s_{\phi}$ and $\gamma_{\psi}>d-s_{\psi}$. This completes the proof of Theorem \ref{kick}. 
\end{proof} 

Let $\mu$ be a Frostman measure supported on $E$. Recursively define 
\begin{equation} \label{chainmeasures} d\mu_0(x):=d\mu(x); \ d\mu^{\epsilon}_{k+1}(x):=\sigma_t^{\epsilon}*\mu^{\epsilon}_k(x) d\mu(x) =: f_{k+1}(x) d\mu(x),\, k\ge 0. 
\end{equation} 

\begin{lemma} \label{chainenergylemma} Let $E\subset\R^d$ be compact with $\hd(E)>\frac{d+1}{2}$, and suppose that $\mu$ is a Frostman measure on  $E$. If $d-dim_{{\mathcal H}}(E)<\alpha<d$, then, with the notation in (\ref{chainmeasures}), 

\begin{equation} \label{chainenergy} \int {|\widehat{\mu^{\epsilon}_k}(\xi)|}^2 {|\xi|}^{-\alpha} d\xi \leq C(k)<\infty, \end{equation} where $C(k)$ is independent of $\epsilon$. \end{lemma}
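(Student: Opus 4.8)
The plan is to prove \eqref{chainenergy} by induction on $k$, using Theorem \ref{kick} as the inductive engine. The base case $k=0$ is immediate: $\int |\widehat{\mu}(\xi)|^2|\xi|^{-\alpha}\,d\xi$ is (a constant multiple of) the $\alpha$-energy of $\mu$, which is finite whenever $\alpha < \hd(E)$, and in particular when $d - \hd(E) < \alpha$; this finiteness is exactly what will be packaged in the forthcoming Lemma \ref{fenergy} (the identification of the weighted $L^2$-norm of $\widehat{\mu}$ with a Riesz energy). For the inductive step, observe that by \eqref{chainmeasures} we have $\mu^{\epsilon}_{k+1} = (\sigma_t^{\epsilon} * \mu^{\epsilon}_k)\,\mu =: f_{k+1}\,\mu$, so that
\begin{equation*}
\int |\widehat{\mu^{\epsilon}_{k+1}}(\xi)|^2 |\xi|^{-\alpha}\,d\xi = \int |\widehat{f_{k+1}\mu}(\xi)|^2|\xi|^{-\alpha}\,d\xi \le C\,\|f_{k+1}\|_{L^2(\mu)}^2
\end{equation*}
by Lemma \ref{fenergy} again (with $\alpha > d - \hd(E)$), so it suffices to bound $\|f_{k+1}\|_{L^2(\mu)}$ uniformly in $\epsilon$.

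The key point is that $f_{k+1} = \sigma_t^{\epsilon} * \mu_k^{\epsilon} = \sigma_t^{\epsilon} * (f_k \mu) = T_{\sigma_t^{\epsilon}} f_k$ in the notation of Theorem \ref{kick}, with the kernel $K = \sigma_t$ and both auxiliary measures equal to $\mu$. Since $\mu$ is a Frostman measure on $E$ with $\hd(E) > \frac{d+1}{2}$, we may take $s_\phi = s_\psi = s_\mu$ with $s_\mu > \frac{d+1}{2}$, so $s = s_\mu$. The Fourier decay $|\widehat{\sigma_t}(\xi)| \lesssim |\xi|^{-(d-1)/2}$ gives $\gamma = \frac{d-1}{2}$, and the hypothesis $s_\mu > \frac{d+1}{2}$ is precisely $\gamma = \frac{d-1}{2} > d - s_\mu$, which is the range condition required by Theorem \ref{kick}. (We also need $\gamma < d/2$, i.e. $\frac{d-1}{2} < \frac{d}{2}$, which holds for all $d \ge 1$.) Hence Theorem \ref{kick} yields $\|f_{k+1}\|_{L^2(\mu)} = \|T_{\sigma_t^{\epsilon}} f_k\|_{L^2(\mu)} \le C \|f_k\|_{L^2(\mu)}$ with $C$ independent of $\epsilon$. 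Iterating, $\|f_{k+1}\|_{L^2(\mu)} \le C^{k+1}\|f_0\|_{L^2(\mu)} = C^{k+1}\|1\|_{L^2(\mu)} = C^{k+1}\mu(E)^{1/2} < \infty$, completing the induction with $C(k) = C'\cdot C^{2k}$.

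The only genuinely delicate point is bookkeeping the admissibility conditions so that a \emph{single} exponent $\alpha$ (equivalently, $\gamma$) works uniformly along the recursion: one must choose $\alpha$ in the stated window $d - \hd(E) < \alpha < d$ and simultaneously split $\gamma = (d-1)/2$ as $(\gamma_\phi + \gamma_\psi)/2$ inside the proof of Theorem \ref{kick} with $\gamma_\phi, \gamma_\psi > d - s_\mu$ — all of which is possible exactly because $s_\mu > (d+1)/2$. I expect the main (mild) obstacle to be verifying that Lemma \ref{fenergy} applies at both ends of the step (once to pass from $\widehat{f_{k+1}\mu}$ back to $\|f_{k+1}\|_{L^2(\mu)}$, and once inside Theorem \ref{kick}) with a compatible choice of weights, and confirming that the constants produced are independent of $\epsilon$ because the Fourier bound $|\widehat{\sigma_t^{\epsilon}}(\xi)| \le |\widehat{\sigma_t}(\xi)|\,|\widehat{\rho}(\epsilon\xi)| \lesssim |\xi|^{-(d-1)/2}$ is uniform in $\epsilon$. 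Everything else is a routine iteration.
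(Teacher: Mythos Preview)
Your argument is correct and essentially identical to the paper's: both reduce \eqref{chainenergy} to a bound on $\|f_k\|_{L^2(\mu)}$ via Lemma~\ref{fenergy}, then iterate Theorem~\ref{kick} with $K=\sigma_t$ and $\gamma=\tfrac{d-1}{2}$ (which is where the hypothesis $\hd(E)>\tfrac{d+1}{2}$ enters) to push $\|f_k\|_{L^2(\mu)}$ down to $\|f_1\|_{L^2(\mu)}$ or $\|1\|_{L^2(\mu)}$. One small slip: the weighted integral $\int|\widehat\mu(\xi)|^2|\xi|^{-\alpha}\,d\xi$ is the $(d-\alpha)$-energy, finite when $\alpha>d-\hd(E)$, not when $\alpha<\hd(E)$---but you state the correct condition immediately afterward, so this is harmless.
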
 


\begin{remark} A careful examination of the proof shows that $C(k)$ above depends on the $(d-\alpha)$-energy of $\mu$, namely $\int {|\widehat{\mu}(\xi)|}^2 {|\xi|}^{-\alpha} d\xi$. \end{remark} 

\begin{proof} 
This lemma is proved in \cite{BIT14}, but we give a proof for the sake of completeness. Begin by using (\ref{chainmeasures}) to rewrite the left hand side of (\ref{chainenergy}) in the form 
\begin{equation} \label{fkenergy} \int {|\widehat{f_k \mu}(\xi)|}^2 {|\xi|}^{-\alpha} d\xi. \end{equation} 


Assuming Lemma \ref{fenergy} as stated below for the moment and applying it to (\ref{fkenergy}), we see that 
$$ \int {|\widehat{f_k \mu}(\xi)|}^2 {|\xi|}^{-\alpha} d\xi \leq C{||f_k||}^2_{L^2(\mu)}.$$ We have thus reduced the issue to proving that ${||f_k||}_{L^2(\mu)}$ is bounded. 

Define the operator $Tf(x)=\sigma_t^{\epsilon}*(f\mu)$. Observe that $f_k(x)=Tf_{k-1}(x)$. By Theorem \ref{kick}, 
$$ {||f_k||}^2_{L^2(\mu)}={||Tf_{k-1}||}^2_{L^2(\mu)} \leq C {||f_{k-1}||}^2_{L^2(\mu)} \leq C^k {||f_1||}^2_{L^2(\mu)}$$
\begin{equation} \label{hinge} \leq C^k \int {(\sigma_t^{\epsilon}*\mu(x))}^2 d\mu(x) \end{equation} and this quantity is bounded, once again, by Thm. \ref{kick}. This completes the proof of Lemma \ref{chainenergylemma}, up to the proof of Lemma \ref{fenergy}. 
\end{proof} 
\begin{lemma} \label{fenergy} Let $\mu$ be a compactly supported Borel measure such that $\mu(B(x,r)) \leq Cr^s$ for some $s \in (0,d)$. Suppose that $\alpha>d-s$. Then for $f \in L^2(\mu)$, 
\begin{equation} \label{hibob} \int {|\widehat{f\mu}(\xi)|}^2 {|\xi|}^{-\alpha} d\xi \leq C'{||f||}^2_{L^2(\mu)}. \end{equation}
\end{lemma}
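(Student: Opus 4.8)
The plan is to prove Lemma~\ref{fenergy} by expanding the square and recognizing the resulting double integral as a weighted energy integral controlled by the Frostman condition. Writing $\widehat{f\mu}(\xi) = \int e^{-2\pi i x\cdot\xi} f(x)\,d\mu(x)$, I would substitute into the left side of (\ref{hibob}) and use Fubini to obtain
\begin{equation} \label{expandplan}
\int {|\widehat{f\mu}(\xi)|}^2 {|\xi|}^{-\alpha}\,d\xi = \int\!\!\int f(x)\overline{f(y)} \left( \int e^{-2\pi i (x-y)\cdot\xi} {|\xi|}^{-\alpha}\,d\xi \right) d\mu(x)\,d\mu(y).
\end{equation}
The inner $\xi$-integral is (a constant multiple of) the Riesz kernel: for $0 < \alpha < d$ one has $\int e^{-2\pi i z\cdot\xi}|\xi|^{-\alpha}\,d\xi = c_{d,\alpha}|z|^{-(d-\alpha)}$ in the distributional sense. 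Hence the right-hand side of (\ref{expandplan}) equals $c_{d,\alpha}\int\!\!\int f(x)\overline{f(y)}|x-y|^{-(d-\alpha)}\,d\mu(x)\,d\mu(y)$, and it suffices to bound this by $C'\|f\|_{L^2(\mu)}^2$.

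The second step is to bound the bilinear form by $\|f\|_{L^2(\mu)}^2$ times the operator norm of the integral operator $S g(x) = \int |x-y|^{-(d-\alpha)} g(y)\,d\mu(y)$ on $L^2(\mu)$. By Schur's test (or a direct Cauchy--Schwarz argument), it is enough to show the kernel satisfies the uniform bound $\sup_x \int |x-y|^{-(d-\alpha)}\,d\mu(y) \le C$. This is where the hypothesis $\alpha > d - s$ enters: the exponent $d - \alpha$ satisfies $d - \alpha < s$. One then decomposes the $y$-integral into dyadic annuli $\{2^{-j-1} \le |x-y| < 2^{-j}\}$ around $x$ (together with the far-away part $|x-y| \gtrsim \operatorname{diam}(\operatorname{supp}\mu)$, which is trivially bounded since $\mu$ is finite and compactly supported), and estimates the $j$-th piece by $2^{j(d-\alpha)}\mu(B(x,2^{-j})) \le C\, 2^{j(d-\alpha)} 2^{-js} = C\, 2^{-j(s - (d-\alpha))}$. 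Since $s - (d-\alpha) > 0$, this is a convergent geometric series, giving the desired uniform bound and completing the proof.

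The main obstacle — really the only subtle point — is making the formal Fourier computation in (\ref{expandplan}) rigorous, since $|\xi|^{-\alpha}$ is not integrable at infinity and $f\mu$ is only a measure-density, not a Schwartz function. I would handle this by working with the mollified/truncated version: replace $|\xi|^{-\alpha}$ by $|\xi|^{-\alpha}e^{-\pi\epsilon|\xi|^2}$ (or truncate to $|\xi|\le N$), carry out the now-justified Fubini interchange, pass the limit through using monotone convergence on the left side, and use on the right side that the regularized kernels converge to $c_{d,\alpha}|z|^{-(d-\alpha)}$ while remaining dominated in a way compatible with the Schur bound above; alternatively one can first prove the estimate for $f$ replaced by a smooth approximation and pass to the limit in $L^2(\mu)$. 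Everything else is routine: the dyadic annulus estimate is exactly the standard computation showing a Frostman measure has finite $(d-\alpha)$-dimensional Riesz energy locally, and Schur's test is textbook.
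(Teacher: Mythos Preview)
Your proposal is correct and follows essentially the same route as the paper: rewrite the left side via the Riesz kernel identity as $\langle Tf,f\rangle_{L^2(\mu)}$ with $Tf(x)=\int |x-y|^{-(d-\alpha)}f(y)\,d\mu(y)$, bound the kernel's $\mu$-column sums by a dyadic decomposition using $\alpha>d-s$, and conclude by Schur's test. The paper is terser about justifying the Fourier inversion step you flag as the only subtle point, but otherwise the arguments coincide.
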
 
\begin{proof}
Let us observe that 
\begin{equation} \label{fenergysetup} \int {|\widehat{f\mu}(\xi)|}^2 {|\xi|}^{-\alpha} d\xi=C \int \int f(x)f(y) {|x-y|}^{-d+\alpha} 
d\mu(x)d\mu(y)= \langle Tf,f \rangle, \end{equation} where 
$$ Tf(x)=\int {|x-y|}^{-d+\alpha} f(y)d\mu(y)$$ and the inner product above is with respect to $L^2(\mu)$. Observe that 
$$ \int {|x-y|}^{-d+\alpha} d\mu(y) \approx \sum_{j>0} 2^{j(d-\alpha)} \int_{|x-y| \approx 2^{-j}} d\mu(y) \leq C \sum_{j>0} 
2^{j(d-\alpha-s)} \leq C'$$ since $\alpha>d-s$, where we have used $diam(supp(\mu))<\infty$. 

By symmetry, $\int {|x-y|}^{-d+\alpha} d\mu(x) \leq C'$ and Schur's test (\cite{Schur11}, see also \cite{St93}) implies at once that 
$$ {||Tf||}_{L^2(\mu)} \leq C' {||f||}_{L^2(\mu)},$$ which implies that conclusion of Lemma \ref{fenergy} in view of (\ref{fenergysetup}) and the Cauchy-Schwarz inequality. The proof of Lemma \ref{chainenergy} is thus complete. 
\end{proof} 
\vskip.125in 

We also need to show that the measure $d\mu^{\epsilon}_k$ is
non-trivial. A variant of this result is at the core of the proof of
the main result in \cite{BIT14}, as explained in the paragraph following Thm.~\ref{mainbit} above. See also \cite{MS99} where it was originally shown that the set of distances determined by a set of Hausdorff dimension $>\frac{d+1}{2}$ contains an interval. For the background on the Falconer distance problem and the latest results see \cite{Fal86}, \cite{Erd05} and \cite{W99}. 

\begin{lemma} \label{nontrivial} With the notation above, 
\begin{equation} \label{nontrivialest} \liminf_{\epsilon \to 0} \int d\mu^{\epsilon}_k(x)>0, \end{equation} provided that $\mu$ is a Frostman measure on a set of Hausdorff dimension $>\frac{d+1}{2}$. 
\end{lemma}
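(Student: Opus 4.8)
The plan is to show that $\int d\mu^{\epsilon}_k(x)$ stays bounded away from zero as $\epsilon\to 0$ by exhibiting a uniform-in-$\epsilon$ \emph{lower} bound, which complements the uniform upper bounds already obtained via Theorem \ref{kick}. Since $\int d\mu^{\epsilon}_k(x)=\int f_k(x)\,d\mu(x)=\langle f_k,1\rangle_{L^2(\mu)}=\langle Tf_{k-1},1\rangle_{L^2(\mu)}$, where $Tf(x)=\sigma_t^{\epsilon}*(f\mu)(x)$, the natural idea is to run the recursion at the level of these integrals. First I would observe that $T$ is a positivity-preserving operator on $L^2(\mu)$ (its kernel $\sigma_t^{\epsilon}(x-y)$ is nonnegative), so $f_k\ge 0$ for all $k$, and more importantly the quantity $m_k^{\epsilon}:=\int f_k\,d\mu$ can be estimated from below in terms of $m_{k-1}^{\epsilon}$: writing $m_k^{\epsilon}=\iint \sigma_t^{\epsilon}(x-y)f_{k-1}(y)\,d\mu(y)\,d\mu(x)$ and comparing with the $L^1$ norm of $f_{k-1}$, one wants a reverse-type inequality of the shape $m_k^{\epsilon}\gtrsim m_{k-1}^{\epsilon}$ valid on the relevant range of $t$. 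The cleanest route is to use the Cauchy--Schwarz inequality in the form $m_{k-1}^{\epsilon}=\int f_{k-1}\,d\mu \le \bigl(\int f_{k-1}^2\,d\mu\bigr)^{1/2}$ (recall $\mu$ is a probability measure) together with the uniform upper bound $\int f_{k-1}^2\,d\mu\le C(k)$ from Lemma \ref{chainenergylemma} and its proof; this reduces the problem to a single clean lower bound on the base quantity.

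The base case is the heart of the matter. One must show that
\[
\liminf_{\epsilon\to 0}\ \iint \sigma_t^{\epsilon}(x-y)\,d\mu(x)\,d\mu(y)\ >\ 0
\]
for $t$ in a suitable open interval. This is precisely the statement that the \emph{distance measure} of $\mu$, i.e. the pushforward of $\mu\times\mu$ under $(x,y)\mapsto|x-y|$, has a continuous (indeed bounded and positive) density near some radius $t$; this is exactly the Mattila--Sjölin result cited as \cite{MS99}, and it requires $\hd(E)>\frac{d+1}{2}$. I would either invoke it directly or reprove it: the Fourier-analytic expression
\[
\iint \sigma_t^{\epsilon}(x-y)\,d\mu(x)\,d\mu(y)=\int \widehat{\sigma_t^{\epsilon}}(\xi)\,|\widehat{\mu}(\xi)|^2\,d\xi,
\]
combined with the decay $|\widehat{\sigma_t}(\xi)|\lesssim |\xi|^{-(d-1)/2}$ and the energy bound $\int |\widehat{\mu}(\xi)|^2 |\xi|^{-\alpha}\,d\xi<\infty$ for $\alpha<\hd(E)$, shows the integral converges absolutely and uniformly in $\epsilon$ to $\int \widehat{\sigma_t}(\xi)|\widehat{\mu}(\xi)|^2\,d\xi$ when $(d-1)/2>d-\hd(E)$, i.e. $\hd(E)>\frac{d+1}{2}$. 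Continuity in $t$ of this limit then follows from dominated convergence, and since its integral over $t$ against a bump equals a positive multiple of $\iint |x-y|^{-(d-1)}\,d\mu\,d\mu>0$ (a genuine energy integral, finite and positive in this dimensional range), the limiting density must be strictly positive on some open interval $I$.

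Having fixed such a $t\in I$ with $\delta_0:=\liminf_{\epsilon\to 0} m_1^{\epsilon}>0$, I would then chain the estimates: by the reverse inequality sketched above, $m_k^{\epsilon}\ge c(k)\,(m_{k-1}^{\epsilon})^{?}$ — more precisely, the cleanest bookkeeping is to note $m_k^{\epsilon}\ge (m_{k-1}^{\epsilon})^2/\|f_{k-1}\|_{L^2(\mu)}^2$ is \emph{not} quite what falls out; instead I would directly estimate $m_k^{\epsilon}=\langle f_{k-1},\sigma_t^{\epsilon}*\mu\rangle_{L^2(\mu)}$ and use that $\sigma_t^{\epsilon}*\mu(x)\ge 0$ is bounded below on a set of positive $\mu$-measure (again by the base case applied pointwise-ish, or by a second-moment argument), so that $m_k^{\epsilon}\gtrsim m_{k-1}^{\epsilon}$ with an $\epsilon$-independent constant. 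Iterating $k-1$ times gives $\liminf_\epsilon m_k^{\epsilon}\ge c^{k}\delta_0>0$. The main obstacle I anticipate is making the inductive lower bound genuinely uniform in $\epsilon$: the upper bounds are automatic from Theorem \ref{kick}, but propagating a lower bound through the convolution requires controlling where $f_{k-1}$ is large relative to $\mu$, which is cleanest to handle by combining the uniform $L^1$ lower bound with the uniform $L^2$ upper bound (so that $f_{k-1}$ cannot concentrate on a $\mu$-null set) rather than by any pointwise argument.
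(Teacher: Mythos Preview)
Your treatment of the base case $k=1$ is correct and matches the paper: the Fourier expression $\int\widehat{\sigma_t^{\epsilon}}(\xi)|\widehat{\mu}(\xi)|^2\,d\xi$, the decay $|\widehat{\sigma_t}(\xi)|\lesssim|\xi|^{-(d-1)/2}$, dominated convergence for continuity in $t$, and positivity of the $t$-integral together yield the Mattila--Sj\"olin conclusion. The gap is in your inductive step. You correctly rewrite $m_k^{\epsilon}=\langle f_{k-1},\sigma_t^{\epsilon}*\mu\rangle_{L^2(\mu)}=\langle f_{k-1},f_1\rangle_{L^2(\mu)}$, but the information you propose to use---uniform lower bounds on $\int f_{k-1}\,d\mu$ and $\int f_1\,d\mu$ together with uniform upper bounds on $\|f_{k-1}\|_{L^2(\mu)}$ and $\|f_1\|_{L^2(\mu)}$---is \emph{not} sufficient to bound the inner product from below. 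Those bounds only say that each of $f_{k-1}$ and $f_1$ is individually spread over a set of positive $\mu$-measure; they do not prevent the two functions from living on essentially disjoint pieces of $E$, in which case $\langle f_{k-1},f_1\rangle$ can be arbitrarily small. (Note also that $T$ is self-adjoint on $L^2(\mu)$ but not positive semidefinite, since $\widehat{\sigma_t}$ oscillates, so no spectral log-convexity trick of the form $m_{k-1}^2\le m_k m_{k-2}$ is available.) Your last paragraph flags exactly this difficulty but does not resolve it.

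The paper avoids the issue by not comparing $m_k^{\epsilon}$ with $m_{k-1}^{\epsilon}$ at all. Instead it reruns the entire base-case argument at every inductive step, replacing the pair $(\mu,\mu)$ by $(\mu,\mu_{k-1})$, where $\mu_{k-1}=\lim_{\epsilon\to 0}\mu_{k-1}^{\epsilon}$ is the nontrivial limit measure guaranteed by the inductive hypothesis. The crucial input that makes this work is Lemma~\ref{chainenergylemma}: $\mu_{k-1}$ inherits the same energy bound $\int|\widehat{\mu_{k-1}}(\xi)|^2|\xi|^{-\alpha}\,d\xi<\infty$ for $\alpha>d-\hd(E)$. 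One then writes
\[
\int d\mu_k^{\epsilon}=\int\widehat{\sigma}(t\xi)\,\widehat{\rho}(\epsilon\xi)\,\widehat{\mu_{k-1}}(\xi)\,\widehat{\mu}(\xi)\,d\xi=M_k(t)+R^{\epsilon}(t),
\]
uses Cauchy--Schwarz on the two energy integrals (for $\mu$ and for $\mu_{k-1}$) to show $R^{\epsilon}(t)\to 0$ and $M_k$ is continuous in $t$, and finally observes $\int M_k(t)\,dt=\iint d\mu\,d\mu_{k-1}>0$, so $M_k>0$ on an open interval. The point is that the \emph{energy estimate on the iterated measure $\mu_{k-1}$}, not merely the $L^2(\mu)$ bound on its density $f_{k-1}$, is what substitutes for the overlap argument you are missing.
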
 
\begin{proof} To prove the lemma, assume inductively that 
\begin{equation} \label{inductionlower} \liminf_{\epsilon \to 0} \int d\mu^{\epsilon}_{k-1}(x)>0. \end{equation} 

Note that this condition holds by definition if $k=1$ due to the fact that $\mu$ is a probability measure supported on $E$. By (\ref{inductionlower}) and Lemma \ref{chainenergylemma}, 
\begin{equation} \label{limitmeasure} \mu_{k-1} \equiv \lim_{ \epsilon \to 0} \mu^{\epsilon}_{k-1} \end{equation} is a non-zero Borel measure supported on $E$. This allows us to redefine $d\mu_k^{\epsilon}$ in (\ref{chainmeasures}) to equal 
$$\sigma_t^{\epsilon}*\mu_{k-1}(x) d\mu(x).$$

\vskip.125in 

We now write 
\begin{equation} \label{Msetup} \int d\mu^{\epsilon}_k(x)=\int \sigma_t^{\epsilon}*\mu_{k-1}(x) d\mu(x) \end{equation}
$$=\int \widehat{\sigma}_t(\xi) \widehat{\rho}(\epsilon \xi) \widehat{\mu}_{k-1}(\xi) \widehat{\mu}(\xi) d\xi$$ 
$$=\int \widehat{\sigma}(t\xi) \widehat{\mu}_{k-1}(\xi) \widehat{\mu}(\xi) d\xi+R^{\epsilon}(t)$$
$$=M(t)+R^{\epsilon}(t).$$ 

We now follow the argument in \cite{IMT12} to see that if $t>0$, $M(t)$ is continuous and $\lim_{\epsilon \to 0} R^{\epsilon}(t)=0$. 

\vskip.125in 

We have 
$$ M(t+h)-M(t)=\int (\widehat{\sigma}((t+h)\xi)-\widehat{\sigma}(t\xi)) \widehat{\mu}_{k-1}(\xi) \widehat{\mu}(\xi) d\xi.$$

The integrand goes to $0$ as $h \to 0$, so we proceed using the dominated convergence theorem. If $t>0$, the expression above is bounded by

$$ C(t) \int {|\xi|}^{-\frac{d-1}{2}} |\widehat{\mu}_{k-1}(\xi)| |\widehat{\mu}(\xi)| d\xi$$
$$ \leq C(t) {\left( \int {|\widehat{\mu}_{k-1}(\xi)|}^2 {|\xi|}^{-\frac{d-1}{2}} d\xi \right)}^{\frac{1}{2}} \cdot {\left( \int {|\widehat{\mu}(\xi)|}^2 {|\xi|}^{-\frac{d-1}{2}} d\xi \right)}^{\frac{1}{2}}$$ and this expression is finite by Lemma \ref{chainenergylemma}. We use the fact that,  if $t\ge t_0>0$, the estimate $|\widehat{\sigma}(t\xi)| \leq C {|\xi|}^{-\frac{d-1}{2}}$ holds with $C$ independent of $t$. This proves that $M(t)$ is continuous away from the origin. 

\vskip.125in

We now prove that $\lim_{\epsilon \to 0} R^{\epsilon}(t)=0$. We have 
$$ |R^{\epsilon}(t)| \leq \int \widehat{\sigma}(t \xi) |(1-\widehat{\rho}(\epsilon \xi))| |\widehat{\mu}_{k-1}(\xi)| 
|\widehat{\mu}(\xi)| d\xi$$
$$ \leq C \int_{|\xi|>\epsilon^{-1}}{|\xi|}^{-\frac{d-1}{2}} |\widehat{\mu}_{k-1}(\xi)| |\widehat{\mu}(\xi)| d\xi$$
$$ \leq C{\left( \int_{|\xi|>\epsilon^{-1}} {|\widehat{\mu}_{k-1}(\xi)|}^2 {|\xi|}^{-\frac{d-1}{2}} d\xi \right)}^{\frac{1}{2}} 
\cdot {\left( \int_{|\xi|>\epsilon^{-1}} {|\widehat{\mu}(\xi)|}^2 {|\xi|}^{-\frac{d-1}{2}} d\xi \right)}^{\frac{1}{2}}$$
$$ \leq C{\left( \int_{{\Bbb R}^d} {|\widehat{\mu}_{k-1}(\xi)|}^2 {|\xi|}^{-\frac{d-1}{2}} d\xi \right)}^{\frac{1}{2}} \cdot 
{\left( \int_{|\xi|>\epsilon^{-1}} {|\widehat{\mu}(\xi)|}^2 {|\xi|}^{-\frac{d-1}{2}} d\xi \right)}^{\frac{1}{2}}$$
$$ \leq C' {\left( \int_{|\xi|>\epsilon^{-1}} {|\widehat{\mu}(\xi)|}^2 {|\xi|}^{-\frac{d-1}{2}} d\xi \right)}^{\frac{1}{2}},$$ where in the last step we used Lemma \ref{chainenergylemma} once again. 

\vskip.125in 

We conclude that it is enough to show that 
\begin{equation} \label{putin} \lim_{\epsilon \to 0} \int_{|\xi|>\epsilon^{-1}} {|\xi|}^{-\frac{d-1}{2}} {|\widehat{\mu}(\xi)|}^2 d\xi=0. \end{equation}

Using  $s=dim_{{\mathcal H}}(E)-\delta$ for arbitrarily small $\delta>0$,  we have 
\begin{equation} \label{almostzero} \lim_{\epsilon \to 0} \sum_{j>\log_2(\epsilon^{-1})} \int_{2^j \leq |\xi| \leq 2^{j+1}} {|\xi|}^{-\frac{d-1}{2}} {|\widehat{\mu}(\xi)|}^2 d\xi. \end{equation}
Applying Lemma \ref{basicenergy}, to be proved below, we see that (\ref{almostzero}) is bounded by 
$$ \leq C \lim_{\epsilon \to 0} \sum_{j>\log_2(\epsilon^{-1})} 2^{-j \frac{d-1}{2}} \cdot 2^{j(d-s)}.$$ 
Hence, if $dim_{{\mathcal H}}(E)>\frac{d+1}{2}$, the limit is $0$. We have thus shown that 
$$ \lim_{\epsilon \to 0} \int d\mu^{\epsilon}_k(x)=M(t),$$ where $M(t) \ge 0$ is continuous function away from the origin. If we can show that $M(t)$ is not identically $0$, it will follow that there exists an open interval $I$, on which, $M(t)>c>0$. To see that $M(t)$ is not identically $0$, rewrite (\ref{Msetup}) in the form 
$$ \int \int \sigma^{\epsilon}_t(x-y) d\mu(x) d\mu_{k-1}(y).$$ 

This quantity is comparable to the Radon-Nikodym derivative of the measure on $\Delta(E)=\{|x-y|: x,y \in E\}$ given by 
$$ \liminf_{\epsilon \to 0} \epsilon^{-1} \mu \times \mu_{k-1} \{(x,y): t \leq |x-y| \leq t+\epsilon \}.$$ 

It follows that 
$$ \int M_k(t)dt=\int \int d\mu(x) d\mu_{k-1}(y)$$ and this quantity is strictly positive by (\ref{inductionlower}) and the fact that $\mu$ is a probability measure. This proves that $M_k(t)$ is not identically $0$ and thus completes the proof of Lemma \ref{nontrivial}. 


\end{proof}  

\begin{lemma} \label{basicenergy} Suppose that $\mu$ is a compactly supported Borel probability measure on ${\Bbb R}^d$ such that $\mu(B(x,r)) \leq Cr^s$. Then 
$$ \int_{|\xi| \leq R} {|\widehat{\mu}(\xi)|}^2 d\xi \leq CR^{d-s}.$$ 
\end{lemma}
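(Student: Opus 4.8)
The plan is to majorize the indicator function of the ball $\{|\xi|\le R\}$ by a Gaussian adapted to scale $R$, and then transfer the resulting integral to the spatial side, where the Frostman-type ball condition $\mu(B(x,r))\le Cr^s$ can be applied directly. Concretely, I would first observe that $e^{-|\xi|^2/R^2}\ge e^{-1}$ whenever $|\xi|\le R$, so that
\[
\int_{|\xi|\le R}{|\widehat{\mu}(\xi)|}^2\,d\xi \;\le\; e\int_{\R^d}{|\widehat{\mu}(\xi)|}^2 e^{-|\xi|^2/R^2}\,d\xi .
\]
Next, using $|\widehat{\mu}(\xi)|^2=\int\!\int e^{-2\pi i(x-y)\cdot\xi}\,d\mu(x)\,d\mu(y)$ together with Fubini and the explicit Fourier transform of a Gaussian (the transform of $e^{-|\xi|^2/R^2}$ is $\pi^{d/2}R^d e^{-\pi^2R^2|z|^2}$), I would rewrite the right-hand side as
\[
e\,\pi^{d/2}R^d\int\!\!\int e^{-\pi^2 R^2|x-y|^2}\,d\mu(x)\,d\mu(y),
\]
reducing everything to an estimate for the inner integral $\int e^{-\pi^2R^2|x-y|^2}\,d\mu(y)$, uniform in $x$.

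For that inner integral I would fix $x$ and split the domain of $y$ into the central ball $\{|x-y|<1/R\}$ and the dyadic annuli $\{2^j/R\le |x-y|<2^{j+1}/R\}$ for $j\ge 0$. On the central ball the Gaussian factor is $\le 1$ and the ball condition gives $\mu$-mass $\le C R^{-s}$; on the $j$-th annulus the Gaussian factor is $\le e^{-\pi^2 2^{2j}}$ while the ball condition bounds the $\mu$-mass by $C(2^{j+1}/R)^s$, so the annular contributions sum to $C R^{-s}\sum_{j\ge 0}2^{(j+1)s}e^{-\pi^2 2^{2j}}$, a convergent series. Hence $\int e^{-\pi^2R^2|x-y|^2}\,d\mu(y)\le C'R^{-s}$ with $C'$ independent of $x$; integrating this in $x$ against the probability measure $\mu$ (whose total mass is $1$) and collecting the constants gives $\int_{|\xi|\le R}|\widehat{\mu}(\xi)|^2\,d\xi\le C R^{d-s}$, as claimed. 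Note that finiteness of $\mathrm{diam}(\mathrm{supp}\,\mu)$ is not even needed here, since the Gaussian tail already truncates the $j$-sum; one could alternatively use the bounded support to cap the number of annuli.

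I do not expect any genuine obstacle in this argument: it is a classical computation, and the only points requiring a little attention are bookkeeping of the dimensional constants in the Gaussian Fourier pair and the (immediate) convergence of the series $\sum_{j\ge0}2^{js}e^{-c2^{2j}}$. If one prefers to avoid explicit Gaussians, the same proof goes through with $e^{-|\xi|^2/R^2}$ replaced by $\widehat{\psi}(\xi/R)$ for any fixed Schwartz $\psi\ge 0$ whose Fourier transform is $\ge 1$ on the unit ball and of rapid decay; the rapid decay of $\widehat{\psi}$ then plays the role of the Gaussian tail in the annular sum.
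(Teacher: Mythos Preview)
Your argument is correct and follows essentially the same strategy as the paper: majorize the indicator of $\{|\xi|\le R\}$ by a smooth bump, pass to the spatial side via Fourier inversion, and apply the ball condition $\mu(B(x,r))\le Cr^s$. The only cosmetic difference is that the paper chooses the majorant $\widehat{h}(\xi/R)$ with $h$ smooth and \emph{compactly supported}, so that on the spatial side $h(R(x-y))$ is supported in $\{|x-y|\lesssim 1/R\}$ and the ball condition applies in one step, whereas your Gaussian choice forces the dyadic annular sum; you yourself note this alternative at the end of your proposal.
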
 
\begin{proof}
To prove the lemma, construct a smooth compactly supported function $h$ such that 
$$  \int_{|\xi| \leq R} {|\widehat{\mu}(\xi)|}^2 d\xi \leq \int
{|\widehat{\mu}(\xi)|}^2 \ \widehat{h}(\xi/R) d\xi.$$ This quantity is
bounded by $$ R^d \int \int h(R(x-y)) d\mu(x) d\mu(y) \leq CR^{d-s},$$
as claimed.  \end{proof} 
\subsection{Proof of Theorem \ref{necklace} (i)} 

Define 
\begin{equation} \label{necklacemeasure} {\mathcal N}_k^{\epsilon}(\mu)=\int \dots \int \left\{ \prod_{j=1}^{k+1} \sigma_t^{\epsilon}(x^{j+1}-x^j)d\mu(x^j) \right\}\sigma_t^{\epsilon}(x^{k+1}-x^1) d\mu(x^{k+1}). \end{equation} 

\vskip.125in 

Since $k$ is even, we may write $k=2n-2$ with $n$ an integer. Observe that 

\begin{equation} \label{csready} {\mathcal N}_k^{\epsilon}(\mu)=\int \int {\left\{\int \dots \int 
\prod_{j=1}^n \sigma_t^{\epsilon}(x^{j+1}-x^j) 
\prod_{j=2}^{n-1} d\mu(x^j) \right\}}^2 d\mu(x^1)d\mu(x^{n+1}). \end{equation}

\vskip.125in 

\subsubsection{Lower bound} 

\vskip.125in 

Applying Cauchy-Schwarz to (\ref{csready}) we see that it suffices to obtain a lower bound for 
\begin{equation} \label{cschain} \int \dots \int \left\{ \prod_{j=1}^n \sigma_t^{\epsilon}(x^{j+1}-x^j) d\mu(x^j) \right\} 
d\mu(x^{n+1}). \end{equation} 

In other words, the Cauchy-Schwarz inequality turns a chain into a necklace. The case $k=8$ is depicted in Fig. 3 above. 

\vskip.125in 

Observe that the quantity in (\ref{cschain}) equals $ \int d\mu^{\epsilon}_n(x)$ and we already proved 
in Lemma \ref{nontrivial} above that the $\liminf_{\epsilon \to 0}$ of this quantity is positive. 
\subsubsection{Upper bound} 
%
%
Define ${\mathcal N}_k^{\epsilon, \alpha}$ by the formula
\begin{equation} \label{3linesprep} {\mathcal N}_k^{\epsilon, \alpha}
  = \int \int F(x^1, x^n)^2 d\mu(x^1)d\mu(x^{n}), \end{equation} where 
\[ \begin{aligned} F(x^1, x^n) &= \int \dots \int 
\sigma^{\epsilon, -\alpha}(x^2-x^1) \sigma_t^{\epsilon, -\alpha}(x^n-x^{n-1}) 
\prod_{j=2}^{n-1} \sigma_t^{\epsilon, \alpha}(x^{j+1}-x^j) 
\prod_{j=2}^{n-1} d\mu(x^j), \text{ and }  \\
\sigma^{\alpha}(x) &=\frac{1}{\Gamma(\alpha)}
{(1-{|x|}^2)}_{+}^{\alpha-1}, \quad \sigma^{\epsilon, \alpha}=\sigma^{\alpha}*\rho_{\epsilon},
\end{aligned} \]
and $\alpha$ is a complex number. Recall the well-known fact (see e.g. \cite{St93,So93}) that 
\begin{equation} \label{alphadecay} |\widehat{\sigma}_t^{\alpha}(\xi)| \leq C{|\xi|}^{-\frac{d-1}{2}-Re(\alpha)}. \end{equation}

First consider the case $Re(\alpha)=1$, $n \ge 3$. Let $\alpha=1-iu$. Then 
\begin{align} |{\mathcal N}_k^{\alpha}(\mu)| &\leq \int \int {\left\{\int \dots \int 
G(x^1, x^2, x^{n-1}, x^{n})
\prod_{j=2}^{n-1} d\mu(x^j) \right\}}^2 d\mu(x^1)d\mu(x^{n}) \nonumber
\\ &\leq \int \int {\left\{ \int \int G(x^1, x^2, x^{n-1}, x^{n})
d\mu(x^2)d\mu(x^{n-1}) \right\}}^2 d\mu(x^1) d\mu(x^n) \label{1},
\text{ where } \\ G&=G(x^1, x^2, x^{n-1}, x^{n}) = |\sigma_t^{\epsilon,
  -1+iu}(x^2-x^1)| |\sigma_t^{\epsilon,
  -1+iu}(x^n-x^{n-1})|. \nonumber \end{align}
Observe that 
$$ |\sigma_t^{\alpha}(x)|=\frac{1}{|\Gamma(\alpha)|} {(1-{|x|}^2)}_{+}^{Re(\alpha)-1} $$ and we shall denote $|\sigma_t^{\epsilon, \alpha}(x)|=:\lambda^{\epsilon, \alpha}(x)$. 
In order to bound (\ref{1}), it suffices to show that 
$$ \int {(\lambda^{\epsilon,-1+iu}*\mu(x))}^2 d\mu(x) \leq C(u)$$ if $\mu$ is Frostman measure on a set of Hausdorff dimension 
$>\frac{d+3}{2}$. Since 
\begin{equation} \label{lambdadecay} |\widehat{\lambda^{\epsilon, -1+iu}}(\xi)| \leq C(u){|\xi|}^{-\frac{d-3}{2}} \end{equation} by (\ref{alphadecay}) and its proof, the claim follows from Theorem \ref{kick}. One can check using Stirling's formula that $C(u)$ grows like $Ce^{C|u|}$. 


We now consider the case $Re(\alpha)=-1$, $n \ge 3$. Then 
$$ |{\mathcal N}_k^{\alpha}(\mu)| \leq \int \int {\left\{\int \dots \int 
\prod_{j=2}^{n-1} |\sigma_t^{\epsilon, -1+iu}(x^{j+1}-x^j)|
\prod_{j=2}^{n-1} d\mu(x^j) \right\}}^2 d\mu(x^1)d\mu(x^{n})$$

\begin{equation} \label{eschonemnogo}= \int \int {\left\{\int \dots \int 
\prod_{j=2}^{n-1} \lambda^{\epsilon, -1+iu}(x^{j+1}-x^j)
\prod_{j=2}^{n-1} d\mu(x^j) \right\}}^2 d\mu(x^1)d\mu(x^{n}). \end{equation} 

\vskip.125in 

Let $g_1(x)=\lambda^{\epsilon, -1+iu}*\mu(x)$ and define inductively $g_j(x)=\lambda^{\epsilon, -1+iu}*(g_{j-1}\mu)(x)$. By inspection, the expression in (\ref{eschonemnogo}) equals 
\begin{equation} \label{eschochutchut} \int \int {|g_n(x^n)|}^2 d\mu(x^n) d\mu(x^1)=\int {|g_n(x^n)|}^2 d\mu(x^n). \end{equation}

Let $Tg(x)=\lambda^{\epsilon, -1+iu}*g(x)$. Then the right hand side of (\ref{eschochutchut}) equals 
$$ \int {|Tg_{n-1}(x)|}^2 d\mu(x).$$ 

Applying Thm.~\ref{kick} repeatedly, recalling (\ref{lambdadecay}) and that $\mu$ is a Frostman measure, we see that this expression is 
$ \leq C(n) {||g_1||}^2_{L^2(\mu)}$, provided that  
$$\hd(E)>d-\frac{d-3}{2}=\frac{d+3}{2}.$$
Applying Thm.~\ref{kick} one last time, we see that ${||g_1||}_{L^2(\mu)}$ is finite and the proof of the upper bound when $n \ge 3$ is completed by applying the following variant of the classical Hadamard three lines lemma due to Hirschman. 

\begin{lemma} \label{3lines} \cite{H52} If $\Phi$ is a continuous function on the strip $S$ that is holomorphic in the interior of $S$ and satisfies the bound
$$ \sup e^{-k|Im(z)| } \log|\Phi(z)|<\infty, z \in S$$ for some constant $k<\pi$, then
$$ \log|\Phi(\theta)| \leq \frac{\sin(\pi \theta)}{2} \int_{-\infty}^{\infty} \frac{\log|\Phi(iy)|}{\cosh(\pi y)-\cosh(\pi \theta)}+\frac{\log|\Phi(1+iy)|}{ \cosh(\pi y)+\cosh(\pi \theta)} dy$$ for all $\theta \in (0,1)$. 
\end{lemma}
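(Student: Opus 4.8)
The plan is to pass from the infinite strip $S=\{0\le\mathrm{Re}(z)\le1\}$ to a long rectangle, invoke the Poisson representation of a subharmonic function there, and then let the rectangle exhaust $S$. The single estimate that makes this work, and the only place the hypothesis $k<\pi$ is used, is that the harmonic measure of the two short ends of a rectangle of width $1$, seen from a point in its middle, decays like $e^{-\pi N}$, which dominates the permitted growth $e^{k|\mathrm{Im}\,z|}$ of $\log|\Phi|$.

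First I would discard the trivial case $\Phi\equiv0$ (both sides equal $-\infty$) and set $u:=\log|\Phi|$, which is subharmonic on the open strip $S^{\circ}$, is continuous as a $[-\infty,\infty)$-valued function on $S$, and satisfies $u(z)\le Ce^{k|\mathrm{Im}\,z|}$ with $k<\pi$; since the hypothesis and the conclusion are both one-sided, only this upper bound is ever used. Next I would record the Poisson kernel of the strip: the conformal map $z\mapsto e^{\pi iz}$ carries $S^{\circ}$ onto the upper half-plane and $\theta+i0$ to $e^{\pi i\theta}$, and pulling back the half-plane Poisson kernel gives, on the edges $\{\mathrm{Re}(z)=0\}$ and $\{\mathrm{Re}(z)=1\}$, the densities $\omega_{0}(\theta,y)=\frac{\sin\pi\theta}{2(\cosh\pi y-\cos\pi\theta)}$ and $\omega_{1}(\theta,y)=\frac{\sin\pi\theta}{2(\cosh\pi y+\cos\pi\theta)}$. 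One checks (correcting the evident misprint $\cosh\pi\theta\to\cos\pi\theta$) that the right-hand side of the lemma equals $\int_{\R}\bigl[\omega_{0}(\theta,y)\log|\Phi(iy)|+\omega_{1}(\theta,y)\log|\Phi(1+iy)|\bigr]\,dy$.

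For $N>0$ set $R_{N}=(0,1)\times(-N,N)$. On $\overline{R_{N}}$ the function $u$ is upper semicontinuous and bounded above and is subharmonic in $R_{N}$, so the sub-mean-value inequality for harmonic measure gives $\log|\Phi(\theta)|=u(\theta)\le\int_{\partial R_{N}}u\,d\omega^{R_{N}}_{\theta}$. I would split $\partial R_{N}$ into the two vertical edges and the two horizontal edges $\{\mathrm{Im}(z)=\pm N\}$. On the vertical edges, domain monotonicity of harmonic measure (a maximum-principle comparison of $R_{N}$ with $S$, and with $R_{N'}$ for $N<N'$) shows that $\omega^{R_{N}}_{\theta}$ increases, as $N\to\infty$, to $\omega_{0}(\theta,\cdot)\,dy$ and $\omega_{1}(\theta,\cdot)\,dy$; writing $u=\log^{+}|\Phi|-\log^{-}|\Phi|$ and applying dominated convergence on the positive part (legitimate because $\log^{+}|\Phi|\le Ce^{k|y|}$ while $\omega_{j}(\theta,y)\le C'e^{-\pi|y|}$ and $k<\pi$) and monotone convergence on the negative part, the vertical contribution converges to the right-hand side of the lemma. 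On the horizontal edges, $\int_{\{\mathrm{Im}=\pm N\}}u\,d\omega^{R_{N}}_{\theta}\le Ce^{kN}\,\omega^{R_{N}}_{\theta}\bigl(\{\mathrm{Im}=\pm N\}\bigr)$; an explicit separation-of-variables computation (or the standard estimate through the first Dirichlet eigenvalue $\pi^{2}$ of the interval $(0,1)$) shows $\omega^{R_{N}}_{\theta}(\{\mathrm{Im}=\pm N\})=O(e^{-\pi N})$, so this term is $O(e^{(k-\pi)N})\to0$. Taking a subsequence of $N$ along which the horizontal contribution tends to its lower limit (which is $\le0$) then yields $\log|\Phi(\theta)|\le(\text{right-hand side})$, as desired.

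The step I expect to be the main obstacle is the passage to the limit $N\to\infty$: one must check that the rectangular harmonic measures converge to the strip Poisson kernel and, more delicately, that the boundary integral $\int\log^{-}|\Phi|\,d\omega$ is finite. The finiteness is forced by the argument itself --- were that integral infinite at some interior point, the rectangle inequality above would give $u\equiv-\infty$ there, hence $\Phi\equiv0$, a contradiction --- so the right-hand side is a genuine real number and the asserted inequality is meaningful. (As an alternative to the exhaustion, one can work directly on $S$: let $h$ be the Poisson integral over $\partial S$ of the boundary values of $u$, observe that $u-h$ is subharmonic with non-positive boundary $\limsup$ almost everywhere and $u(z)-h(z)=o(e^{\pi|\mathrm{Im}\,z|})$, and apply the sharp Phragm\'en--Lindel\"of principle for a strip of width one; but establishing $-h(z)=o(e^{\pi|\mathrm{Im}\,z|})$ requires the same care with the negative part of the boundary data.)
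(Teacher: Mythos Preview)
The paper does not prove this lemma at all; it is quoted verbatim from Hirschman \cite{H52} and used as a black box. So there is no ``paper's own proof'' to compare against, and any correct argument you supply goes strictly beyond what the authors provide.

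Your sketch is a standard and correct proof of Hirschman's inequality. The key ingredients --- subharmonicity of $\log|\Phi|$, the Poisson integral inequality on the truncated rectangles $R_N$, the $O(e^{-\pi N})$ decay of the harmonic measure of the short ends (which is exactly what makes the condition $k<\pi$ sharp), and the monotone/dominated convergence on the long edges as $R_N\uparrow S$ --- are all identified correctly, and your handling of the possible divergence of $\int\log^{-}|\Phi|\,d\omega$ (forcing $\Phi\equiv 0$) is the right way to dispose of that edge case. You also correctly spot the misprint in the displayed formula: the denominators should read $\cosh(\pi y)\mp\cos(\pi\theta)$, not $\cosh(\pi y)\mp\cosh(\pi\theta)$; with $\cosh$ the kernel would be negative for small $|y|$ and would not even be a probability density. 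One small point worth tightening: you invoke ``domain monotonicity'' to get convergence of the rectangular harmonic measures on the vertical edges to the strip Poisson densities, but monotonicity alone gives an inequality, not identification of the limit; the clean way is to note that the rectangular Poisson kernels converge pointwise (e.g.\ via the explicit Fourier series, or by Carath\'eodory kernel convergence of the conformal maps) and that the limit must be the strip kernel because both are harmonic with the same boundary data and the right growth. This is routine, and the overall argument is sound.
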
 


The proof of Thm. \ref{necklace} will be complete once we address the upper bound in the case $n=2$ and prove that at least some of the $k$-necklaces obtained are non-degenerate. 
\subsection{Proof of Theorem \ref{necklace} (i) for $n=2$} 
Consider 
\begin{equation} \label{setupsocialism} \int \int \left\{ \int \sigma_t^{\epsilon, \alpha}(x-z) \sigma_t^{\epsilon,-\alpha}(y-z) d\mu(z) \right\}^2 d\mu(x) d\mu(y). \end{equation} 

Suppose that $Re(\alpha)=1$. Then this quantity is bounded by 
$$  \int \int \left\{ \int |\sigma_t^{\epsilon,-\alpha}(y-z)| d\mu(z) \right\}^2 d\mu(x) d\mu(y)$$ 
\begin{equation} \label{almostsocialism} =\int \int \left\{ \int \frac{1}{|\Gamma(\alpha)|} {(1-{|y-z|}^2)}^{-Re(\alpha)-1}_{+} d\mu(z) \right\}^2 d\mu(x) d\mu(y). \end{equation}
This quantity is bounded above by the proof of the case $Re(\alpha)=1$, $n \ge 3$ above. Thus we are done by Lemma \ref{3lines} because we arrive at the exact same expression taking $Re(\alpha)=-1$ and reversing the roles of the variables. This takes care of the upper bound. The lower bound for a general $n$ is proved above. 
%
%
\subsection{Proof  of Theorem \ref{necklace} (ii)} 
Rewrite the expression in (\ref{almostsocialism}) above in the form 
$$ \int \int {(\lambda^{\epsilon, -1+iu}*\mu(y))}^2 d\mu(y) d\mu(x)=\int {(\lambda^{\epsilon, -1+iu}*\mu(x))}^2 d\mu(x).$$ 
Before we apply Thm. \ref{kick}, we need a simple calculation. Treating $K$ as a measure, observe that 
$$ \lambda^{\epsilon,-1+iu}(B(x,r)) \leq Cr^{d-2}.$$ 
The proof follows by a direct calculation. We now apply Thm. \ref{kick} with $K=\mu$, $\phi=\lambda^{\epsilon, -1+iu}$ and $\psi=\mu$. We shall assume that 
$$ |\widehat{\mu}(\xi)| \leq C{|\xi|}^{-\gamma}$$ for some $\gamma>0$. 
It follows that the $L^2(\phi) \to L^2(\psi)$ bound holds, with $f \equiv 1$ if 
$$ \gamma>d-\frac{d-2+s}{2}=\frac{d}{2}+1-\frac{s}{2}.$$ 
In particular, this means that if $s=d-\delta$, for some $\delta>0$, then 
$$ \gamma>1+\frac{\delta}{2}.$$ 


It remains to prove that at least some of the necklaces  obtained above are non-degenerate. 
%
%
\subsection{The non-degeneracy argument} 
Suppose, without loss of generality, that $|x^1-x^{j_0}| \leq N\epsilon$ for some $j_0 \not=1, k$ and that $|x^1-x^j|>N \epsilon$ for all $j<j_0$. See Fig.~4. 

Integrating in $d\mu(x^{j_0})$ and noting that $\sigma^{\epsilon}(x^{j_0}-x^{j_0+1}) \leq C\epsilon^{-1}$, $\sigma^{\epsilon}(x^{j_0}-x^{j_0-1}) \leq C\epsilon^{-1}$, we see that the expression in (\ref{necklacemeasure}), with the additional restriction that two vertices are within 
$N \epsilon$ of each other, is bounded by 
$$ C \cdot k \cdot {(N \epsilon)}^s \cdot \epsilon^{-2} \cdot C^{\epsilon}_{k-2}(\mu) \leq C'k N^s \epsilon^{s-2},$$ where $C_k^{\epsilon}(\mu)$ is defined in (\ref{chainradonnikodym}), and the fact that $C_{k-2}^{\epsilon}(\mu) \leq C$, independently of $\epsilon$ is proved in \cite{BIT14} and also follows easily from the fact, proved in the course of proving Lemma \ref{chainenergylemma} above that ${||f_k||}_{L^2(\mu)}$, with $f_k$ defined in (\ref{chainmeasures}) is bounded by a finite constant depending only on $k$. 

\begin{figure}
\label{chainfigure}
\centering
\includegraphics[scale=.5]{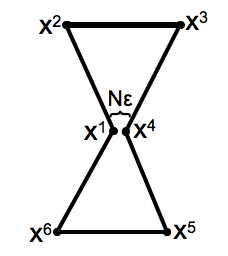}
\caption{A bottleneck}
\end{figure}
%
%
We conclude that the integral
\begin{align} \label{goodshit} \int_{S} &\left\{
    \prod_{j=1}^{k-1} \sigma_t^{\epsilon}(x^{j+1}-x^j)d\mu(x^j)
  \right\}\sigma_t^{\epsilon}(x^{k}-x^1) d\mu(x^{k}), \text{ where } \\ S &= \{(x^1, \dots, x^{k+1}) \in E^{k+1}: |x^1-x^j|> N\epsilon;
\ j\not=1 \}, \nonumber \end{align} is bounded from below by a non-zero constant as long as, say, $N<C\epsilon^{-1+\frac{2}{s}+\delta}$ for some $\delta>0$. If $\delta>0$ is chosen small enough, $\epsilon^{-1+\frac{2}{s}+\delta} \to \infty$ as $\epsilon \to 0$. Taking liminf as $\epsilon \to 0$ we see that there exists a non-degenerate $k$-necklace with gap $\equiv t$. 
\section{Concluding remarks} 

\vskip.125in 

The purpose of this section is to put the methods of this paper into perspective and describe their limitations. In simple terms the approach of this paper can be described as follows. We use the Cauchy-Schwarz inequality to relate a chain to necklace with even number of vertices. This procedure allows us to obtain an immediate lower bound on the Radon-Nikodym derivative of the natural candidate for the measure on set of necklaces with prescribed gaps. We then obtain an upper bound on the Radon-Nikodym derivative using the three lines lemma and harmonic analytic inequalities, thus completing the proof of the assertion that vertices of the necklace can be found inside a compact subset of ${\Bbb R}^d$ of a sufficiently large Hausdorff dimension. 

The method of proof described above suggests that further progress may be possible if we use the results of this paper and then create more elaborate point configuration by the means of the Cauchy-Schwarz or H\"older's inequalities. What types of configuration can we hope to obtain in this way? In order to get the flavor, let's start with a $4$-necklace and apply the Cauchy-Schwarz inequality in the $x^1,x^2,x^3$-variables. We obtain 
\begin{align} &{\left[ \int_{E^4} \prod_{j=1}^3 \sigma^{\epsilon}(x^j-x^{j+1})
      \sigma^{\epsilon}(x^4-x^1) \prod_{j=1}^{4}d\mu(x^j)
    \right]}^2 \nonumber \\
\label{setupcommunism} \qquad \qquad &\leq  \int_E \left\{ \int_{E^3} \prod_{j=1}^3 \sigma^{\epsilon}(x^j-x^{j+1})
      \sigma^{\epsilon}(x^4-x^1) \prod_{j=1}^{3}d\mu(x^j) \right\}^2 d\mu(x^4) \\ 
\label{twonecklaces} \qquad \qquad &\begin{aligned}
  \leq \int_{E^7} &\sigma^{\epsilon}(x^1-x^4) \cdot \prod_{j=1}^3
  \sigma^{\epsilon}(x^{j+1}-x^j) \times \\  &\qquad \qquad \qquad \sigma^{\epsilon}(x^4-x^7)
  \cdot \prod_{j=4}^6 \sigma^{\epsilon}(x^{j+1}-x^j)
  \prod_{j=1}^{7}d\mu(x^j), \end{aligned} \end{align}
%
%
which is the Radon-Nikodym derivative of the natural measure on two $4$-necklaces sharing the vertex $x^4$. See Figure 5. 
\begin{figure}
\label{twonecklaces}
\centering
\includegraphics[scale=.5]{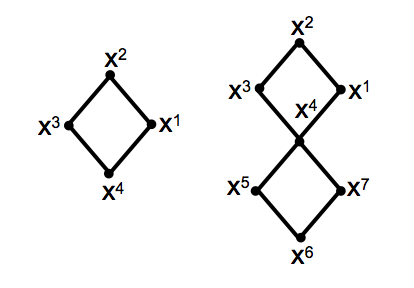}
\caption{Cauchy-Schwarz turns a necklace into two necklaces sharing a point}
\end{figure}
Obtaining an upper bound for (\ref{twonecklaces}) is by no means trivial, but possible. We outline the argument because it leads to interesting harmonic analysis and illustrates the rich set of connections between geometric problems and harmonic analytic inequalities that these questions foster. Recalling the idea behind (\ref{setupsocialism}), we can express (\ref{setupcommunism}) in the form 
\begin{equation} \int \int \int F^2(x^2,x^4) G^2(x^7,x^4) d\mu(x^2)d\mu(x^4)d\mu(x^6),\end{equation}
where 
\begin{align*} F(x^2,x^4) &=\int \sigma^{\epsilon}(x^2-x^1)
  \sigma^{\epsilon}(x^4-x^1) d\mu(x^1),  \text{ and } \\  
G(x^7,x^4) &=\int \sigma^{\epsilon}(x^7-x^4) \sigma^{\epsilon}(x^7-x^6) d\mu(x^7). 
\end{align*} 
Applying Cauchy-Schwarz yet again reduces matters to bounding the quantity 
\begin{equation} \label{powerfour} \int \int \left\{ \int \sigma^{\epsilon}(x^2-x^1) \sigma^{\epsilon}(x^4-x^1) d\mu(x^1) \right\}^4 
d\mu(x^2) d\mu(x^4). \end{equation}

We pause for a moment to point out the difference between this quantity and (\ref{setupsocialism}), the expression we needed to bound to handle the rhombus ($4$-necklace). In (\ref{setupsocialism}) the inner expression in (\ref{powerfour}) is raised to the power of $2$ instead of the power of $4$. This naturally leads us to consider the $L^4$ version of Thm. \ref{kick}, which can be obtained, with a worse yet still non-trivial lower bound on exponents $s_{\phi}$ and $s_{\psi}$, corresponding to the dimensional restriction, by a rather straightforward modification of the proof. 

By the same method we can start with any necklace with an even number of vertices and by applying H\"older's inequality with the integer exponent $m \ge 2$ (positive integer), we obtain $m$ necklaces sharing a common vertex. While it would be difficult to classify succinctly all the point configurations that can be obtained by starting with a chain and successively applying H\"older's inequality, this example is quite representative and also illustrates the limitations of our method. 

There remain geometric configurations that cannot be handled either by
the methods of this paper, or those in \cite{CLP14}. For example, the
three-dimensional corner, described in (\ref{3dcorner}) appears to be
outside the reach of both methods. The authors hope to return to this issue in a sequel. 

\vskip.25in

\end{document}